\newcommand{\rrvert}{\vert}
\newcommand{\llvert}{\vert}
\newtheorem{thm}{Theorem}
\newtheorem{lemma}{Lemma}
\newtheorem{prop}{Proposition}
\theoremstyle{definition}
\newtheorem{remark}{Remark}
\newcommand{\R}{{\mathbb R}}
\newcommand{\abs}[1]{\llvert  #1\rrvert }
\newcommand{\pair}[1]{\left\langle #1\right\rangle}
\newcommand{\set}[1]{\left\{#1\right\}}
\newcommand{\ind}[1]{\mathbf{1}_{#1}}
\newcommand{\ex}[1]{\mathsf{E}\left[#1\right]}
\begin{document}
\begin{frontmatter}

\title{Construction of maximum likelihood estimator in the mixed
fractional--fractional
Brownian motion model with double long-range dependence}

\author{\inits{Yu.}\fnm{Yuliya}\snm{Mishura}\corref{cor1}}\email
{myus@univ.kiev.ua}
\cortext[cor1]{Corresponding author.}
\author{\inits{I.}\fnm{Ivan}\snm{Voronov}}\email{ivan.voronov@univ.kiev.ua}

\address{Taras Shevchenko National University of Kyiv, Volodymyrska str. 64, 01601,
Kyiv, Ukraine}

\markboth{Yu. Mishura, I. Voronov}{Construction of MLE}

\begin{abstract}
We construct an estimator of the unknown drift parameter $\theta\in\R$
in the linear model
\[
X_t=\theta t+\sigma_1 B^{H_1}(t)+ \sigma_2 B^{H_2}(t),\;t\in[0,T],
\]
where $B^{H_1}$ and $B^{H_2}$ are two independent fractional Brownian
motions with Hurst indices $H_1$ and $H_2$ satisfying the condition
$\frac{1}2 \leq H_1<H_2<1.$ Actually, we reduce the problem to the
solution of the integral Fredholm equation of the 2nd kind with
a~specific weakly singular kernel depending on two power exponents. It
is proved that the kernel can be presented as the product of a bounded
continuous multiplier and weak singular one, and this representation
allows us to prove the compactness of the corresponding integral
operator. This, in turn, allows us to establish an
existence--uniqueness result for the sequence of the equations on the
increasing intervals, to construct accordingly a sequence of
statistical estimators, and to establish asymptotic consistency.
\end{abstract}

\begin{keyword}
Fractional Brownian motion\sep
maximum likelihood estimator\sep
integral equation with weakly singular kernel\sep
compact operator\sep
asymptotic consistency
\MSC[2010] 60G22\sep62F10\sep62F12\sep62G12
\end{keyword}

\received{17 June 2015}
\revised{7 July 2015}
\accepted{7 July 2015}
\publishedonline{20 July 2015}
\end{frontmatter}

\section{Introduction}\label{intro}
Consider the continuous-time linear model
\begin{equation}
\label{model} X(t)=\theta t+\sigma_1 B^{H_1}(t)+
\sigma_2 B^{H_2}(t),\;t\in[0,T],
\end{equation}
where $B^{H_1}$ and $B^{H_2}$ are two independent fractional Brownian
motions with different Hurst indices $H_1$ and $H_2$ defined on some
stochastic basis $(\varOmega, \mathfrak{F}, (\mathfrak{F})_t, t\geq0,
\mathsf{P}). $ We assume that the filtration is generated by these
processes and completed by $\mathsf{P}$-negligible sets of $\mathfrak{F}_0$.

Recall that the fractional Brownian motion (fBm) $B^H_t,\ t\geq0, $
with Hurst index $H\in(0,1)$ is a centered Gaussian process with the
covariance function
\[
\ex{B^H(t)B^H(s)}=\frac{1}2
\bigl(t^{2H}+s^{2H}-\abs{t-s}^{2H} \bigr).
\]

From now on we suppose that the Hurst indices in \eqref{model} satisfy
the inequality
\begin{equation*}
\frac{1}2\leq H_1<H_2<1,
\end{equation*}
and we consider the continuous modifications of both processes, which
exist due to the Kolmogorov theorem. Assuming that the Hurst indices
$H_1$, $H_2$ and parameters $\sigma_1\geq0$, $\sigma_2\geq0$ are
known, we aim to estimate the unknown drift parameter $\theta$ by the
continuous observations of the trajectories of $X$. Due to the
long-range dependence property of fBm with $H>1/2$, we call our model
the model with double long-range dependence.

In the case where $H_1=\frac{1}2$, the problem of drift parameter
estimation in the model \eqref{model} was solved in \cite
{CaiChigKlept}, and in the case where $\frac{1}2< H_1<H_2<1$ and
$H_2-H_1>1/4$, the estimator was constructed in \cite{MiSumfbms1}. The
goal of the present paper is to generalize the results from \cite
{MiSumfbms1} to arbitrary $\frac{1}2\leq H_1<H_2<1$. The problem, more
technical than principal, is that in the case where $H_2-H_1>1/4$ and
$H_1>1/2$, the construction of the estimator is reduced to the question
if the solution of the Fredholm integral equation of the 2nd kind with
weakly singular kernel from $L_2[0,T]$ exists and is unique, but for
$H_2-H_1\leq1/4$, the kernel does not belong to $L_2[0,T]$. Moreover,
in this case, we can say that in the literature it is impossible to
pick up for this kernel any suitable standard techniques for working
with weak singular kernels, and it does not belong to any standard
class of weak singular kernels. The matter lies in the fact that the
kernel contains two power indices, $H_1$ and $H_2$, and they create
more complex singularity than it usually happens. So, it is necessary
to make many additional efforts in order to prove the compactness of
the corresponding integral operator. Immediately after establishing the
compactness of the corresponding integral operator, the problem of
statistical estimation follows the same steps as in the paper \cite
{MiSumfbms1}, and we briefly present these steps for completeness.

The paper is organized as follows. In Section~\ref{sec2}, we describe
the model and explain how to reduce the solution of the estimation
problem to the existence--uniqueness problem for the integral Fredholm
equation of the 2nd kind with some nonstandard weakly singular kernel.
In Section~\ref{sec3}, we solve the existence--uniqueness problem.
Section~\ref{sec4} is devoted to the basic properties of estimator,
that is, we establish its form, consistency, and asymptotic normality.
Section~\ref{sec5} contains the properties of hypergeometric function
used in the proof of the existence--uniqueness result for the main
Fredhom integral equation.

\section{Preliminaries. How to reduce the original problem to the
integral equation}\label{sec2} Since we suppose that the Hurst
parameters $H_1, H_2$ and scale parameters $\sigma_1,\sigma_2$ are
known, for technical simplicity, we consider the case where $\sigma
_1=\sigma_2=1$ and, as it was mentioned before, $\frac{1}2\leq
H_1<H_2<1$. If we wish to include the unknown parameter $\theta$ into
the fractional Brownian motion with the smallest Hurst parameter in
order to apply Girsanov's theorem for construction of the estimator, we
consider a couple of processes
$\{\widetilde{B}^{H_1}(t), B^{H_2}(t),\;t\ge0\}$, $i=1,2,$
defined on the space $(\varOmega,
\mathfrak{F}, (\mathfrak{F})_t)$ and let $\mathsf{P}_\theta$ be a
probability measure under which $\widetilde{B}^{H_1}$ and $B^{H_2}$
are independent, $B^{H_2}$ is a fractional Brownian motion with Hurst
parameter $H_2$, and $\widetilde{B}^{H_1}$
is a fractional Brownian motion with Hurst parameter $H_1$ and drift $
\theta$, that is,
\[
\widetilde{B}^{H_1}(t)=\theta t+ {B}^{H_1}(t).
\]

The probability measure $\mathsf{P}_0$ corresponds to the case $\theta
=0$. Our main problem is the construction of maximum likelihood
estimator for $\theta\in\R$ by the observations of the process
$Z(t)=\theta t+ B^{H_1}(t)+ B^{H_2}(t)= \widetilde{B}^{H_1}(t)+
B^{H_2}(t),\;t\in[0,T].$ As in \cite{MiSumfbms1}, we apply to $Z$ the
linear transformation in order to reduce the construction to the sum
with one term being the Wiener process. So, we take the kernel
$l_H(t,s)=(t-s)^{1/2-H}s^{1/2-H}$ and construct the integral
\begin{equation}
\begin{aligned}\label{lin.trans} Y(t)=&\int_0^tl_{H_1}(t,s)dZ(s)=
\theta\mathrm{B} \biggl(\frac{3}2-H_1,\frac{3}2-H_1
\biggr)t^{2-2H_1}+ M^{H_1}(t)
\\
&+\int_0^t l_{H_1}(t,s)dB^{H_2}(s),
\end{aligned} %
\end{equation}
where $\mathrm{B}(\alpha,\beta)=\int_0^1x^{\alpha-1}(1-x)^{\beta
-1}dx$ is the beta function, and $M^{H_1}$ is a~Gaussian martingale
(Molchan martingale), admitting the representations
\[
M^H(t) = \int_0^t
l_H(t,s)dB^H (s) = \gamma_H \int
_0^t s^{1/2-H} dW(s)
\]
with
$\gamma_H = (2H(\frac{3}2-H) \varGamma(3/2-H)^3\varGamma(H+\frac{1}2)\varGamma
(3-2H)^{-1})^{\frac{1}2}$ and a Wiener process $W$. According to \cite
{MiSumfbms1},
the linear transformation \eqref{lin.trans} is well defined, and the
processes $Z$ and $Y$ are observed simultaneously. This means that we
can reduce the original problem to the equivalent problem of the
construction of maximum likelihood estimator of $\theta\in\R$ basing
on the linear transformation~$Y$.
For simplicity, denote $\mathcal{B}_{H_1}:=\mathrm{B} (\frac{3}2-H_1,\frac{3}2-H_1 )$. Now the main problem can be formulated as
follows. Let $\frac{1}2\leq H_1<H_2<1,$
\[
\Biggl\{\widetilde{X}_1(t)=\widetilde{M}^{H_1}(t),
X_2(t):=\int_0^t
l_{H_1}(t,s)dB^{H_2}(s),\;t\ge0\Biggr\},
\]
$i=1,2$, be a couple of processes defined on the space $(\varOmega,
\mathfrak{F})$, and $\mathsf{P}_\theta$ be a probability measure
under which $\widetilde{X}_1$ and $X_2$ are independent, $B^{H_2}$ is
a~ fractional Brownian motion with Hurst parameter $H_2$, and
$\widetilde{X}_1$
is a martingale with square characteristics $\langle \widetilde
{X}_1\rangle(t)=\frac{\gamma^2_{H_1}}{2-2H_1}t^{2-2H_{1}}$ and drift
$ {\theta\mathcal{B}_{H_1}} t^{2-2H_1}$, that~is,
\[
\widetilde{X}_1(t)=\widetilde{M}^{H_1}(t)= {\theta\mathcal
{B}_{H_1}} t^{2-2H_1}+{M}^{H_1}(t).
\]

Also, denote ${X}_1(t)={M}^{H_1}(t)$. Our main problem is the
construction of maximum likelihood estimator for $\theta\in\R$ by
the observations of the process
\[
Y(t)=\theta\mathcal{B}_{H_1}t^{2-2H_1}+ X_1(t)+X_2(t)=
\widetilde {X}_1(t)+X_2(t).
\]
Note that, under the measure $\mathsf{P}_\theta$, the process
\[
\widetilde{W}(t):=W(t)+ \frac{\theta(2-2H_1)\mathcal{B}_{H_1}}{
\gamma_{H_1} (\frac{3}2-H_1 )}t^{\frac{3}2-H_1}
\]
is a Wiener process with drift. Denote $\delta_{H_1}=\frac
{(2-2H_1)\mathcal{B}_{H_1}}{ \gamma_{H_1}}$.

By Girsanov's theorem and independence of $X_1$ and $X_2$,
\begin{equation*}
\begin{aligned} \frac{dP_\theta}{dP_0}&=\exp \Biggl\{\theta
\delta_{H_1}\int_0^Ts^{\frac{1}2- H_1}d
\widetilde{W}(s) - \frac{\theta^2\delta_{H_1}^2}{4(1- H_1)}T^{2-2H_1} \Biggr\}
\\
&=\exp \biggl\{\theta\delta_{H_1}\widetilde{X}_1(T) -
\frac{\theta^2\delta_{H_1}^2}{4(1- H_1)}T^{2-2H_1} \biggr\}. \end{aligned} %
\end{equation*}

As it was mentioned in \cite{CaiChigKlept}, the derivative of such a
form is not the likelihood ratio for the problem at hand because it is
not measurable with respect to the observed $\sigma$-algebra
\[
\mathfrak{F}^Y_T:=\sigma\bigl\{Y(t),t\in[0,T]\bigr\}=
\mathfrak{F}^X_T:=\sigma \bigl\{X(t),t\in[0,T]\bigr\},
\]
where $X(t)=X_1(t)+X_2(t).$

We shall proceed as in \cite{CaiChigKlept}. Let $\mu_\theta$ be the
probability measure induced by~$Y$ on the space of continuous functions
with the supremum topology under probability $\mathsf{P}_\theta$.
Then for any measurable set $A,$
$\mu_\theta(A)=\int_A\varPhi(x)\mu_0(dx),$
where $\varPhi(x)$ is a measurable functional such that $\varPhi(X)=E_0
(\frac{dP_\theta}{dP_0} |\mathfrak{F}^X_T )$. This means
that $\mu_\theta\ll\mu_0$ for any $\theta\in\R$. Taking into
account that $\widetilde{X}_1=X_1$ under $\mathsf{P}_0$ and the fact
that the vector process $(X_1, X)$ is Gaussian, we get that the
corresponding likelihood function is given by
\begin{equation*}
\begin{gathered}L_T(X,\theta)=\mathsf{E}_0
\biggl(\frac{d\mathsf
{P}_\theta}{d\mathsf{P}_0} |\mathfrak{F}^X_T \biggr) =
\mathsf{E}_0 \biggl(\exp \biggl\{\theta\delta_{H_1}{X}_1(T)
-\frac{\theta^2\delta_{H_1}^2}{4(1- H_1)}T^{2-2H_1} \biggr\} |\mathfrak{F}^X_T
\biggr)
\\
=\exp \biggl\{\theta\delta_{H_1}\mathsf{E}_0
\bigl({X}_1(T)|\mathfrak {F}^X_T\bigr)+
\frac{\theta^2\delta_{H_1}^2}{2} \biggl(V(T)-\frac{T^{2-2H_1}}{2-2H_1} \biggr) \biggr\}, \end{gathered}
\end{equation*}
where $V(t)=\mathsf{E}_0(X_1(t)-\mathsf{E}_0(X_1(t)|\mathfrak
{F}^X_t))^2,\;t\in[0,T].$

The next reasonings repeat the corresponding part of \cite
{MiSumfbms1}. We have to solve the following problem: to find the
projection $P_X X_1(T)$ of $X_1(T)$ onto
\[
\set{X(t) = X_1(t)+X_2(t),\ t\in[0,T]}.
\]
According to \cite{jost}, the transformation formula for converting
fBm into a Wiener process is of the form
\[
W_i(t) = \int_0^t \bigl(
\bigl(K^*_{H_i}\bigr)^{-1}\ind{[0,t]} \bigr) (s)
dB^{H_i}(s),\ i=1,2,
\]
where
\[
\bigl(K^*_H f\bigr) (s) = \int_{s}^{T}
f(t) \partial_t K_H(t,s)dt = \beta_H
s^{1/2-H} \int_{s}^{T} f(t)
t^{H-1/2}(t-s)^{H-3/2}dt,
\]
$\beta_H =  (\frac{H(2H-1)}{\mathrm{B}(H-1/2,2-2H)} )^\frac{1}2, $ and the square-integrable kernel $K_H(t,s)$ is of the form
\[
K_{H}(t,s) = \beta_H s^{1/2-H}\int
_s^t (u-s)^{H-3/2}u^{H-1/2}du.
\]

We have that $W_i,\ i=1,2,$ are standard Wiener processes, which are
obviously independent. Also, we have
\begin{equation}
\label{X1} X_1(t) = \gamma_{H_1} \int
_0^t s^{1/2-H_1} dW_1(s),\;
B^{H_2}(t) = \int_0^t
K_{H_2}(t,s)dW_2(s).
\end{equation}
Then
\[
X_2(t) = \int_0^t
K_{H_1,H_2}(t,s) dW_2(s),
\]
where
\begin{equation}
\label{kernelKh1h2} K_{H_1,H_2}(t,s) = \beta_{H_2} s^{1/2-H_2}\int
_{s}^{t}(t-u)^{1/2-H_1}u^{H_2-H_1}(u-s)^{H_2-3/2}du.
\end{equation}
For an interval $[0,T]$, denote by $L^2_H[0,T]$ the completion of the
space of simple functions $f\colon[0,T]\to\R$ with respect to the
scalar product
\[
\pair{f,g}_{H}^2:= \alpha_H\int
_0^T\int_0^T
f(t)g(s)\abs{t-s}^{2H-2}dsdt,
\]
where $\alpha_H = H(2H-1)$. Note that this space contains both
functions and distributions.
For functions from $ L^2_{H_2}[0,T],$ we have that
\begin{equation*}
\int_0^T f(s)dX_2(s) = \int
_{0}^{T}\bigl(K^*_{H_1,H_2}f\bigr)
(s)dW_2(s),
\end{equation*}
where
\[
\bigl(K^*_{H_1,H_2}f\bigr) (s) = \int_s^T
f(t)\partial_t K_{H_1,H_2}(t,s)dt.
\]

The projection of $X_1(T)$ onto $\{X(t),t\in[0,T]\}$ is a centered
$X$-measurable Gaussian random variable and, therefore, is of the form
\[
P_X X_1(T) = \int_0^T
h_T(t)dX(t)
\]
with $h_T \in L^2_{H_1}[0,T]$. Note that $h_T$ still may be a
distribution. However, as we will further see, it is a continuous function.
The projection for all $u\in[0,T]$ must satisfy
\begin{equation}
\label{eq2.3} \ex{X(u) P_X X_1(T)} =
\ex{X(u)X_1(T)}.
\end{equation}
Using \eqref{eq2.3} together with independency of $X_1$ and $X_2$, we
arrive at the equation
\begin{equation}
\begin{aligned}\label{eq2.4} &\ex{X_1(u)\int
_{0}^{T}h_T(t) dX_1(t) +
X_2(u)\int_{0}^{T}h_T(t)dX_2(t)}
\\
&\quad = \ex{X_1(u)X_1(T)} = \varepsilon_{H_1}
u^{2-2H_1}, \end{aligned} %
\end{equation}
where $\varepsilon_{H} = \gamma_H^2/(2-2H)$.
Finally, from \eqref{X1}--\eqref{eq2.4} we get the prototype of
a~Fredholm integral equation
\begin{equation}
\begin{gathered}\label{main1} \varepsilon_{H_1}
u^{2-2H_1} = \gamma_{H_1}^2 \int_0^u
h_T(s)s^{1-2H_1} ds + \int_0^T
h_T(s)r_{H_1,H_2}(s,u)ds,\;u\in[0,T], \end{gathered} %
\end{equation}
where
\[
r_{H_1,H_2}(s,u)=\int_0^{s\wedge u}
\partial_s K_{H_1,H_2}(s,v)K_{H_1,H_2}(u,v)dv.
\]
Differentiating \eqref{main1}, we get the Fredholm integral equation
of the 2nd kind,
\begin{equation}
\label{integralequation} \gamma_{H_1}^2 h_T(u)u^{1-2H_1}
+ \int_0^T h_T(s) k(s,u)ds = \gamma
_{H_1}^2 u^{1-2H_1}, \quad u\in(0,T],
\end{equation}
where
\begin{equation}
\label{kernelksu} k(s,u) = \int_0^{s\wedge u}
\partial_s K_{H_1,H_2}(s,v) \partial_u
K_{H_1,H_2}(u,v)dv
\end{equation}
with the function $K_{H_1,H_2}$ defined by \eqref{kernelKh1h2}.

We will establish in Remark~\ref{lemmaker} that for the case $H_1 =
\frac{1}2$, Eq.~\eqref{integralequation} can be reduced to the
corresponding equation from \cite{CaiChigKlept}:
\begin{equation}
\label{integralequationCCK} h_T(u) + H_2(2H_2-1)\int
_0^T h_T(s)|s-u|^{2H_2-2}
ds=1, \quad u\in[0,T],
\end{equation}
but the difference between \eqref{integralequationCCK} and \eqref
{integralequation} lies in the fact that \eqref{integralequationCCK}
can be characterized as the equation with standard kernel, whereas
\eqref{integralequation} with two different power exponents is more or
less nonstandard, and, therefore, it requires an~unconventional approach.
On the one hand, it is known from the paper \cite{MiSumfbms1} that if
the conditions $H_2-H_1>\frac{1}4$ and $H_1>1/2$ are satisfied, then
Eq.~\eqref{integralequation} has a~unique solution $h_{T_n}$ with
$h_{T_n}(t)t^{\frac{1}2- H_1}\in L_2[0,T_n]$ on any sequence of
intervals $[0,T_n]$ except, possibly, a countable number of $T_n$
connected to eigenvalues of the corresponding integral operator (the
meaning of this sentence will be specified later because, finally, we
will get a similar result but in more general situation). On the other
hand, the existence--uniqueness result for Eq.~\eqref
{integralequationCCK} in~\cite{CaiChigKlept} is proved without any
restriction on Hurst index $H_2$ while $H_1= \frac{1}2$. The difference
between these results can be explained so that in \cite{CaiChigKlept}
the authors state the existence and uniqueness of the continuous
solution, whereas in \cite{MiSumfbms1} the solution is established in
the framework of $L_2 $-theory.

In this paper, we propose to consider Eq.~\eqref{integralequation} in
the space $C[0,T]$ again. This means that we consider the corresponding
integral operator as an operator from\break $C[0,T]$ into $C[0,T]$ and
establish an existence--uniqueness result in $C[0,T]$. This approach
has the advantage that we do not need anymore the assumption
$H_2-H_1>\frac{1}4$ and can include the case $H_1=1/2$ again into the
consideration.

We say that two integral equations are equivalent if they have the same
continuous solutions. In this sense, Eqs.~\eqref{main1} and \eqref
{integralequation} are equivalent, and both are equivalent to the equation
\begin{equation}
\label{integralequation2} h_T(u) +\frac{1}{\gamma_{H_1}^2} \int_0^T
h_T(s) \kappa(s,u)ds = 1, \quad u\in[0,T],
\end{equation}
with continuous right-hand side,
where
\begin{equation}
\label{kappa} \kappa(s,u) = u^{2H_1-1}k(s,u), \quad s,u\in[0,T].
\end{equation}

We get that the main problem (i.e., the MLE construction for the drift
parameter) is reduced to the existence--uniqueness result for the
integral equation~\eqref{main1}.

\section{Compactness of integral operator. Existence--uniqueness
result for the Fredholm integral equation}\label{sec3}
Consider the integral operator $K$ generated by the kernel $K$ bearing
in mind that the notations of the kernel and of the corresponding
operator will always coincide:
\[
(Kx) (u) = \int_0^T K(s,u)x(s)ds, \quad x \in
C[0,T].
\]

Now we are in position to establish the properties of the kernel
$\kappa(s,u)$ defined by \eqref{kappa} and \eqref{kernelksu}.
Introduce the notation $[0,T]^2_0=[0,T]^2\setminus\{(0,0)\}$.

\begin{lemma}\label{lemmakernel}
Up to a set of Lebesgue measure zero, the kernel $\kappa(s,u)$, $s,u
\in[0,T]$,
admits the following representation on $[0,T]:$
\begin{equation}
\label{factorization} \kappa(s,u) = %
\begin{cases}
\kappa_0(s,u)\varphi(s,u), & s\neq u, \\
0, & s=u,
\end{cases} %
\end{equation}
where $\varphi(s,u)=(s\wedge u)^{1-2H_1}u^{2H_1-1}
\vert s-u\vert^{2H_2-2H_1-1 }$, and the function $\kappa_0$ is bounded and
belongs to $C([0,T]^2_0)$.
\end{lemma}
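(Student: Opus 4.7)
The plan is to first derive a tractable expression for $\partial_t K_{H_1,H_2}(t,s)$ that exposes its $(t-s)^{H_2-H_1-1}$ singularity together with a bounded continuous prefactor, then substitute this into \eqref{kernelksu}, rescale, and apply an Euler-type transformation of the resulting hypergeometric-type integral to extract the factor $|s-u|^{2H_2-2H_1-1}$.

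For Step 1, the substitution $u = s + v(t-s)$ with $v \in [0,1]$ in \eqref{kernelKh1h2} produces
\[
K_{H_1,H_2}(t,s) = \beta_{H_2}\, s^{1/2-H_2}(t-s)^{H_2-H_1}G(s,t),\quad G(s,t) = \int_0^1 (1-v)^{1/2-H_1}v^{H_2-3/2}(s+v(t-s))^{H_2-H_1}dv.
\]
Differentiation in $t$ and the homogeneity $G(s,t) = t^{H_2-H_1}g(s/t)$ then give
\[
\partial_t K_{H_1,H_2}(t,s) = \beta_{H_2}\, s^{1/2-H_2}(t-s)^{H_2-H_1-1}\,t^{H_2-H_1}\psi(s/t),
\]
where $g$ and $\psi$ are bounded continuous on $[0,1]$; the only delicate check, at $\sigma = s/t = 0$, relies on $2H_2 - H_1 > \tfrac12$ and $\sigma g'(\sigma)\to 0$, both of which follow from $\tfrac12\le H_1 < H_2 < 1$. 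In Step 2, plugging into \eqref{kernelksu} and rescaling $v = (s\wedge u)\,z$ (WLOG $s \leq u$, the opposite case following from $k(s,u) = k(u,s)$) collects the powers of $s$ and yields
\[
k(s,u) = C\, s^{1-2H_1}u^{2H_2-2H_1-1}\, I(s,u),\quad I(s,u) = \int_0^1 z^{1-2H_2}(1-z)^{H_2-H_1-1}(1-sz/u)^{H_2-H_1-1}\psi(z)\psi(sz/u)\,dz.
\]

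The hypothesis $H_2<1$ combined with $H_1\ge \tfrac12$ forces $H_2-H_1 < \tfrac12$. Setting $\psi\equiv 1$, $I(s,u)$ is an Euler integral representing $B(2-2H_2,H_2-H_1)\cdot{}_2F_1(1+H_1-H_2,\,2-2H_2;\,2-H_2-H_1;\,s/u)$; the Euler transformation ${}_2F_1(a,b;c;x)=(1-x)^{c-a-b}{}_2F_1(c-a,c-b;c;x)$ with $c-a-b=2H_2-2H_1-1<0$ produces the factor $((u-s)/u)^{2H_2-2H_1-1}$ explicitly, leaving a bounded continuous ${}_2F_1$ on $[0,1]$. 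For the actual $\psi$-weighted integral I would realise this identity as a change of variables in $z$; continuity and boundedness of $\psi$ then ensure $I(s,u) = ((u-s)/u)^{2H_2-2H_1-1}\widetilde H(s,u)$ with $\widetilde H$ bounded and continuous on $\{0\le s\le u\le T,\,u>0\}$. Combining Steps 1 and 2 and multiplying by $u^{2H_1-1}$ yields $\kappa(s,u)=\kappa_0(s,u)\varphi(s,u)$ with $\kappa_0(s,u)=C\widetilde H(s,u)$; the symmetry $k(s,u)=k(u,s)$ extends the factorization to $s>u$.

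I expect the main technical obstacle to be Step 2: the $\psi$-weights break the pure ${}_2F_1$ identity, so one must argue carefully that the Euler-type change of variables produces a bounded continuous remainder; this is where the hypergeometric function properties collected in Section~\ref{sec5} play a key role. A secondary subtlety is verifying continuity of $\kappa_0$ up to the closure $[0,T]^2_0$, in particular along the diagonal $\{s=u>0\}$ (handled by finiteness of the transformed ${}_2F_1$ at argument $1$, guaranteed by $1+2H_1-2H_2>0$) and at the edges where $s=0$ or $u=0$ (handled by the scaling structure of Step 1); only the corner $(0,0)$ must be excluded, since the homogeneity of the problem produces an indeterminate limit there.
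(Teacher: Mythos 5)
Your proposal is correct and follows essentially the same route as the paper: factor $\partial_t K_{H_1,H_2}(t,s)$ as $\beta_{H_2}s^{1/2-H_2}(t-s)^{H_2-H_1-1}$ times a bounded, continuous, homogeneous prefactor, substitute into \eqref{kernelksu}, rescale to pull out $(s\wedge u)^{1-2H_1}$, and then apply the Euler/M\"obius change of variables $z\mapsto(1-w)/(1-xw)$ to extract $|s-u|^{2H_2-2H_1-1}$ and leave a bounded continuous remainder on $[0,T]^2_0$. The only cosmetic difference is that the paper bounds the prefactor via hypergeometric representations and Proposition~\ref{HGFineq2}, whereas you argue directly from $g$ and $\sigma g'(\sigma)\to 0$; both are sound.
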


\begin{proof} We take \eqref{kernelksu} and first present
the derivative of $K_{H_1,H_2}(t,s)$, defined by~\eqref{kernelKh1h2},
in an appropriate form. To start, put $u=s+(t-s)z$. This allows us to
rewrite $K_{H_1,H_2}(t,s)$ as
\begin{equation}
\begin{aligned}\label{new} K_{H_1,H_2}(t,s)&=
\beta_{H_2}s^{\frac{1}2 -H_2} (t-s)^{H_2-H_1}
\\
&\quad \times \int_{0}^{1}(1-z)^{\frac{1}2 - H_1}
\bigl(s + (t-s)z\bigr)^{H_2-H_1}z^{H_2-\frac{3}2}dz. \end{aligned} %
\end{equation}

Differentiating \eqref{new} w.r.t.\ $t$ for $0<s<t\leq T$, we get
\begin{equation}
\label{summ} %
\begin{aligned} \partial_t
K_{H_1,H_2}(t,s) &= (H_2-H_1)\beta_{H_2}s^{\frac{1}2-H_2}(t-s)^{H_2-H_1-1}
\\
&\quad \times\int_{0}^{1}(1-z)^{\frac{1}2 - H_1}
\bigl(s+ (t-s)z\bigr)^{H_2-H_1}z^{H_2-\frac{3}2}dz
\\
&\quad + (H_2-H_1)\beta_{H_2}s^{\frac{1}2 -H_2}
\\
&\quad \times(t-s)^{H_2-H_1}\int_{0}^{1}(1-z)^{\frac{1}2 - H_1}
\bigl(s + (t-s)z\bigr)^{H_2-H_1-1}z^{H_2-\frac{1}2}dz
\\
& = (H_2-H_1)\beta_{H_2}s^{\frac{1}2 -H_2}
(t-s)^{H_2-H_1-1}
\\
&\quad \times \Biggl(\int_{0}^{1}(1-z)^{\frac{1}2 - H_1}
\bigl(s +(t-s)z\bigr)^{H_2-H_1}z^{H_2-\frac{3}2}dz
\\
&\quad + (t-s)\int_{0}^{1}(1-z)^{\frac{1}2 - H_1}
\bigl(s + (t-s)z\bigr)^{H_2-H_1-1}z^{H_2-\frac{1}2}dz \Biggr)
\\
& =(H_2-H_1)\beta_{H_2}s^{\frac{1}2 -H_2}(t-s)^{H_2-H_1-1}
\\
&\quad \times \Biggl( s^{H_2-H_1}\int_{0}^{1}z^{H_2-\frac{3}2}(1-z)^{\frac{1}2 -H_1}
\biggl(1 - \frac{s-t}{s}z \biggr)^{H_2-H_1}dz
\\
&\quad \,{+}\, (t\,{-}\,s) {s}^{H_2-H_1-1}\!\! \int_{0}^{1}
\!(1\,{-}\,z)^{\frac{1}2 - H_1} \biggl(1 \,{-}\, \frac{s-t}{s}z
\biggr)^{H_2-H_1-1}\!z^{H_2-\frac{1}2}dz \Biggr)\!. \end{aligned} %
\end{equation}

Denote for technical simplicity $\alpha_i=H_i-\frac{1}2,\ i=1,2$. Then,
according to the definition and properties of the Gauss hypergeometric
function (see Eqs.~\eqref{gyper1} and \eqref{gyper2}), the terms in
the right-hand side of \eqref{summ} can be rewritten as follows. For
the first term, thats is, for
\begin{equation}
\label{int1}I_1(t,s):=s^{H_2-H_1}\int_{0}^{1}z^{\alpha_2-1}(1-z)^{-\alpha_1}
\biggl(1 - \frac{s-t}{s}z \biggr)^{H_2-H_1}dz,
\end{equation}
the values of parameters for the underlying integral equal $a=
H_1-H_2$, $b=\alpha_2$, $c=H_2-H_1+1,$ and $x=\frac{s-t}{s}<1$,
respectively; therefore, $\frac{x}{x-1}=\frac{t-s}{t}$, $c-b=
1-\alpha_1$,
and
\begin{align*}
I_1(t,s)&=\mathrm{B}(1-\alpha_1, \alpha_2)s^{H_2-H_1} F \biggl({H_1-H_2,\alpha_2, 1-H_1+H_2};\frac{s-t}{s} \biggr)\\
&= \mathrm{B}(1\,{-}\,\alpha_1, \alpha_2)s^{H_2-H_1}\biggl(\frac{t}{s} \biggr)^{H_2-H_1} \!F \biggl(\! {H_1\,{-}\,H_2, 1\,{-}\,\alpha_1,1\,{-}\,H_1\,{+}\,H_2}; \frac{t\,{-}\,s}{t}\! \biggr)\\
&=\mathrm{B}(1-\alpha_1, \alpha_2)t^{H_2-H_1} F\biggl({H_1-H_2,1-\alpha_1,1-H_1+H_2};\frac{t-s}{t} \biggr).
\end{align*}

Similarly, for the second term, that is, for
\begin{equation}
\label{int2}I_2(t,s):=(t-s) {s}^{H_2-H_1-1} \int
_{0}^{1}z^{\alpha_2 }(1-z)^{-\alpha_1}
\biggl(1 - \frac{s-t}{s}z \biggr)^{H_2-H_1-1}dz,
\end{equation}
the values of parameters for the underlying integral equal $a=
H_1-H_2+1$, $b=\alpha_2+1$, $c=H_2-H_1+2,$ and $x=\frac{s-t}{s}$,
respectively; therefore, $\frac{x}{x-1}=\frac{t-s}{t}$, $c-b=
1-\alpha_1$, and
\begin{equation*}
\begin{aligned} I_2(t,s)&=(t-s) {s}^{H_2-H_1-1}
\mathrm{B}(1-\alpha_1,\alpha_2+1)
\\
&\quad \times F \biggl(H_1-H_2+1,\alpha_2+1,
H_2-H_1+2;\frac{s-t}{s} \biggr)
\\
&=(t-s) {s}^{H_2-H_1-1} \biggl(\frac{t}{s} \biggr)^{H_2-H_1-1}
\\
&\quad \times B(1-\alpha_1,\alpha_2+1)F \biggl(
{H_1-H_2+1, 1-\alpha_1,2-H_1+H_2};
\frac{t-s}{t} \biggr)
\\
&=(t-s) t^{H_2-H_1-1}\mathrm{B}(1-\alpha_1,\alpha_2+1)
\\
&\quad \times F \biggl( {H_1-H_2+1, 1-
\alpha_1,2-H_1+H_2}; \frac{t-s}{t}
\biggr). \end{aligned} %
\end{equation*}
It is easy to see from the initial representations \eqref{int1} and
\eqref{int2} that $I_1(t,s)$ and $I_2(t,s)$ are continuous on the set
$0<s\leq t\leq T$.

Now, introduce the notations
\[
\varPsi_1(t,s)= \mathrm{B}(1-\alpha_1,
\alpha_2) F \biggl({H_1-H_2, 1-
\alpha_1,1-H_1+H_2}; \frac{t-s}{t}
\biggr)
\]
and
\begin{equation*}
\begin{aligned} \varPsi_2(t,s)&= \biggl(\frac{t-s}{t}
\biggr)^{1-H_2+H_1}\mathrm{B}(1-\alpha_1,\alpha_2+1)
\\
&\quad \times F \biggl( {H_1-H_2+1, 1-
\alpha_1,2-H_1+H_2}; \frac{t-s}{t}
\biggr), \end{aligned} %
\end{equation*}
so that $I_1(t,s)=t^{H_2-H_1}\varPsi_1(t,s)$ and
$I_2(t,s)=(t-s)^{H_2-H_1}\varPsi_2(t,s)$.
Note that $\frac{t-s}{t}\in[0,1)$; therefore,
\begin{equation*}
\begin{aligned} &F \biggl({H_1-H_2, 1-
\alpha_1,1-H_1+H_2}; \frac{t-s}{t}
\biggr)
\\
&\quad = \frac{1}{\mathrm{B}(1-\alpha_1, \alpha_2)}\times\int_0^1z^{-\alpha_1}(1-z)^{\alpha_2-1}
\biggl(1-\frac{t-s}{t}z \biggr)^{H_2-H_1}dz
\\
&\qquad \leq\frac{1}{\mathrm{B}(1-\alpha_1, \alpha_2)}\int_0^1z^{-\alpha_1}(1-z)^{\alpha_2-1}dz
= 1, \end{aligned} %
\end{equation*}
whence the function $\varPsi_1(t,s)$ is bounded by $\mathrm{B}(1-\alpha
_1, \alpha_2)$.
In order to establish that $\varPsi_2(t,s)$ is bounded, we use
Proposition~\ref{HGFineq2}. Its conditions are satisfied: $a =
H_1-H_2+1 \in(0,1)$, $b = 1-\alpha_1 > 0$, $c-b = \alpha_2 + 1>1$,
and $x= \frac{t-s}{t} \in[0,1)$. Therefore,
\begin{equation*}
\begin{aligned} &x^{1-H_2+H_1}F ( {H_1-H_2+1,
1-\alpha_1,2-H_1+H_2}; x ) \leq
{x^{1-H_2+H_1}}
\\
&\quad \times \biggl(1-\frac{1-\alpha_1}{1-H_1+H_2}x \biggr)^{-1-H_1+H_2}= \biggl(
\frac{1}{x}-\frac{1-\alpha_1}{1-H_1+H_2} \biggr)^{-1-H_1+H_2}
\\
&\quad \leq \biggl(1 - \frac{1-\alpha_1}{1-H_1+H_2} \biggr)^{-1-H_1+H_2} = \biggl(
\frac{1-H_1+H_2}{\alpha_2} \biggr)^{H_1-H_2+1}, \end{aligned} %
\end{equation*}
whence $\varPsi_2(t,s) \leq\mathrm{B}(1-\alpha_1,\alpha_2+1)
(\frac{1-H_1+H_2}{\alpha_2}  )^{H_1-H_2+1}$.
Additionally, both functions are homogeneous:
\[
\varPsi_i(at,as)=\varPsi_i(t,s)\; \text{for}\; a>0 ,\ i =
1,2.
\]

Introduce the notation
\begin{equation}
\label{Phifunction} \varPhi(t,s) =I_1(t,s)+I_2(t,s)=
t^{H_2-H_1}\varPsi_1(t,s) + (t-s)^{H_2-H_1}
\varPsi_2(t,s)
\end{equation}
and note that $\varPhi\in C([0,T]^2_0)$ is bounded and homogeneous:
\begin{equation}
\label{Phifunction1}\varPhi(at,as)=a^{H_2-H_1}\varPhi (t,s), a>0.
\end{equation}
In terms of notation \eqref{Phifunction}, the representation \eqref
{summ} for $\partial_t K_{H_1,H_2}(t,s)$ can be rewritten as
\begin{equation}
\partial_t K_{H_1,H_2}(t,s) = \beta_{H_2}(H_2-H_1)s^{\frac{1}2
-H_2}(t-s)^{H_2-H_1-1}
\varPhi(t,s).
\end{equation}

In turn, the kernel $k(s,u)$ from \eqref{kernelksu} can be rewritten as
\begin{equation}
\begin{aligned} k(s,u) &= \bigl(\beta_{H_2}(H_2-H_1)
\bigr)^2
\\
&\quad \times\int_{0}^{s\wedge u} v^{1-2H_2}(s-v)^{H_2-H_1-1}(u-v)^{H_2-H_1-1}
\varPhi(s,v) \varPhi(u,v)dv. \end{aligned} %
\end{equation}

Consider the kernel $k(s,u)$ for $s>u$. Then it evidently equals
\begin{equation*}
\begin{aligned} k(s,u) &= \bigl(\beta_{H_2}(H_2-H_1)
\bigr)^2
\\
&\quad \times\int_{0}^{u} v^{1-2H_2}(s-v)^{H_2-H_1-1}(u-v)^{H_2-H_1-1}
\varPhi(s,v) \varPhi(u,v)dv. \end{aligned} %
\end{equation*}

Put $z = \frac{u-v}{s-u}$ and transform $k(s,u)$ to
\begin{equation*}
\begin{aligned} k(s,u)&=\bigl(\beta_{H_2}(H_2-H_1)
\bigr)^2(s-u)^{2H_2-2H_1-1} \int_0^{\frac{u}{s-u}}z^{H_2-H_1-1}(1+z)^{H_2-H_1-1}
\\
&\quad \times\bigl(u-z(s-u)\bigr)^{1-2H_2}\varPhi\bigl(s,u-z(s-u)\bigr)\varPhi
\bigl(u,u-z(s-u)\bigr) dz
\\
&=: \frac{k_0(s,u)}{(s-u)^{1-2H_2+2H_1}}, \end{aligned} %
\end{equation*}
where
\begin{equation*}
\begin{aligned} k_0(s,u)&=\bigl(\beta_{H_2}(H_2-H_1)
\bigr)^2\int_0^{\frac{u}{s-u}}z^{H_2-H_1-1}(1+z)^{H_2-H_1-1}
\\
&\quad \times\bigl(u-z(s-u)\bigr)^{1-2H_2}\varPhi\bigl(s,u-z(s-u)\bigr)\varPhi
\bigl(u,u-z(s-u)\bigr) dz. \end{aligned} %
\end{equation*}

In turn, transform $k_0(s,u)$ with the change of variables $tu= z$ and
apply~\eqref{Phifunction1}:
\begin{align}
k_0(s,u)&=\bigl(\beta_{H_2}(H_2-H_1)
\bigr)^2\int_0^{\frac{1}{s-u}}(tu)^{H_2-H_1-1}(1+tu)^{H_2-H_1-1}\nonumber\\
&\quad \times\bigl(u-tu(s-u)\bigr)^{1-2H_2}\varPhi\bigl(s,u-tu(s-u)\bigr)\varPhi
\bigl(u,u-tu(s-u)\bigr)udt\nonumber\\
&= \bigl(\beta_{H_2}(H_2-H_1)
\bigr)^2 u^{1-2H_1}\int_0^{\frac{1}{s-u}}
\bigl(1-t(s-u)\bigr)^{1-2H_2}(1+tu)^{H_2-H_1-1}\nonumber\\
&\quad \times t^{H_2-H_1-1}\varPhi\bigl(s,u-tu(s-u)\bigr)\varPhi\bigl(1,1-t(s-u)
\bigr)dt.
\end{align}

Introducing the kernel $\kappa_0(s,u)=k_0(s,u)u^{2H_1-1}$, we can
present $k(s,u)$ as
\begin{equation}
\label{kernelksupresent} k(s,u) = \frac{\kappa_0(s,u)}{(s-u)^{1-2H_2+2H_1} u^{2H_1-1}},
\end{equation}
where, for $s>u>0,$
\begin{align}
\kappa_0(s,u) &= \bigl(
\beta_{H_2}(H_2-H_1)\bigr)^2 \int
_{0}^{\frac{1}{s-u}}\bigl(1-(s-u)t\bigr)^{1-2H_2}(1+ut)^{H_2-H_1-1}\nonumber\\
&\quad \times t^{H_2-H_1-1}\varPhi\bigl(s,u-tu(s-u)\bigr)\varPhi\bigl(1,1-t(s-u)
\bigr) dt\nonumber\\
&=\bigl(\beta_{H_2}(H_2-H_1)
\bigr)^2 \int_{0}^{\infty} 1_{t\leq\frac{1}{s-u}}
\bigl(1-(s-u)t\bigr)^{1-2H_2}(1+ut)^{H_2-H_1-1}\nonumber\\
&\quad \times t^{H_2-H_1-1}\varPhi\bigl(s,u-tu(s-u)\bigr)\varPhi\bigl(1,1-t(s-u)
\bigr) dt.\label{kappasuinit}
\end{align}

For the case $u > s>0,$ we can replace $s$ and $u$ in formulas \eqref
{kernelksupresent} and \eqref{kappasuinit}.
Substituting formally $u=s$ into \eqref{kappasuinit}, for $s>0,$ we get
\begin{equation}
\begin{aligned}\label{kappasuinit1} \kappa_0(s,s) &= \bigl(
\beta_{H_2}(H_2-H_1)\bigr)^2\varPhi(s,s)
\varPhi(1,1 ) \int_{0}^{\infty} (1+st)^{H_2-H_1-1}
t^{H_2-H_1-1}dt
\\
&=\bigl(\beta_{H_2}(H_2-H_1)
\bigr)^2s^{H_2-H_1}\varPhi(1,1 )^2\int
_{0}^{\infty} (1+st)^{H_2-H_1-1} t^{H_2-H_1-1}dt.
\end{aligned} %
\end{equation}
Note that $\varPhi(1,1) =\mathrm{B}(1-\alpha_1,\alpha_2)$ and $\int_{0}^{\infty} (1+st)^{H_2-H_1-1} t^{H_2-H_1-1}
dt=s^{H_1-H_2}\allowbreak \mathrm{B}(H_2-H_1,1-2H_2+H_1)$. The former equation
holds due to \eqref{HGFatzero}. We get that $\kappa_0(s,s)$ does not
depend on $s$ and equals some constant $C_H:=(\beta
_{H_2}(H_2-H_1)\mathrm{B}(1-\alpha_1,\alpha_2))^2\mathrm
{B}(H_2-H_1,1-2H_2+H_1)$. Therefore, we define $\kappa_0(s,s)=C_H,\ s>0.$

Now the continuity of $\kappa_0 $ on $(0,T]^2 $ follows from the
Lebesgue dominated convergence theorem supplied by representation
\eqref{kappasuinit}, Eq.~\eqref{kappasuinit1}, and its consequence
$\kappa_0(s,s)=C_H,\ s>0 $, together with the facts that $\varPhi\in
C([0,T]^2_0)$ and is bounded. Consider $\kappa_0(s,u)$ for $u
\downarrow0$ and let $s>0$ be fixed. Then
\begin{equation*}
\begin{aligned} \lim\limits
_{u\downarrow0}\kappa_0(s,u) &=
C_H^1:=\bigl(\beta_{H_2}(H_2-H_1)
\bigr)^2 \varPhi(1,0)
\\
&\quad \times\int_{0}^{1}(1-y)^{1-2H_2}y^{H_2-H_1-1}
\varPhi(1,1-y)dy < \infty, \end{aligned} %
\end{equation*}
and we can put $\kappa_0(s,0)=\kappa_0(0,u)=C_H^1,\ s>0,u>0,$ thus
extending the continuity of $\kappa_0$ to $[0,T]^2_0$.

It is easy to see that the values $\kappa_0(s,s)$ and $\kappa_0(s,0)$
do not depend on $s>0$ and do not coincide: $C_H \neq C_{H}^1$.
Consequently, the limit
\[
\lim\limits
_{(s,u)\rightarrow(0,0)}\kappa_0(s,u)
\]
does not exist and depends on the way the variables $s$ and $u$ tend to
zero. We can equate $\kappa_0(0,0) $ to any constant; for example, let
$\kappa_0(0,0)=0$.

In order to prove that $\kappa_0$ is bounded, we consider the case
$s>u$ (the opposite case is treated similarly) and put $z = (s-u)t$. Then
\begin{equation}
\label{int3} %
\begin{aligned} &\int_{0}^{\frac{1}{s-u}}
\bigl(1-(s-u)t\bigr)^{1-2H_2}(1+ut)^{H_2-H_1-1}t^{H_2-H_1-1}\varPhi
\bigl(s,u-tu(s-u)\bigr)
\\
&\quad \times\varPhi\bigl(1,1-t(s-u)\bigr) dt = \frac{1}{(s-u)^{H_2-H_1}} \int
_{0}^{1} (1-z)^{1-2H_2}
\\
&\quad \times \biggl(1+\frac{u}{s-u}z \biggr)^{H_2-H_1-1}z^{H_2-H_1-1}
\varPhi\bigl(s,u(1-z)\bigr))\varPhi(1,1-z) dz=:I_3(s,u). \end{aligned}
\end{equation}
It follows from \eqref{Phifunction1} that, for $s\neq0,$
\[
\varPhi\bigl(s,u(1-z)\bigr)=s^{H_2-H_1}\varPhi \biggl(1,\frac{u}{s}(1-z)
\biggr).
\]
Denote $r = \frac{s}{s-u}$ and put $t = \frac{1-z}{1-(1-r)z}$. Then
\[
\frac{u}{s-u} = r - 1,\;t<1,\; z= \frac{1-t}{1-t(1-r)}\in(0,1),
\]
and the right-hand side of \eqref{int3} can be rewritten as
\begin{equation}
\label{int4} %
\begin{aligned} I_3(s,u)&= r^{H_2-H_1}
\int_{0}^{1}(1-z)^{1-2H_2}\bigl(1-(1-r)z
\bigr)^{H_2-H_1-1}z^{H_2-H_1-1}
\\
&\quad \times\varPhi \biggl(1,\frac{u}{s}(1-z) \biggr)\varPhi(1,1-z) dz =
r^{1-2H_1}\int_{0}^{1} t^{1-2H_2}(1-t)^{H_2-H_1-1}
\\
&\quad \times \bigl(1-(1-r)t\bigr)^{2H_1-1}\varPhi \biggl(1,
\frac{u}{s} \frac{rt}{1-(1-r)t} \biggr)\varPhi \biggl(1,\frac{rt}{1-(1-r)t}
\biggr) dt. \end{aligned} %
\end{equation}

Finally, put $y=1-t$. Then the right-hand side of \eqref{int4} is
transformed to
\begin{equation*}
\begin{aligned} I_3(s,u)&=r^{1-2H_1}r^{2H_1-1}
\int_{0}^{1}(1-y)^{1-2H_2}y^{H_2-H_1-1}
\biggl(1-y\frac{r-1}{r} \biggr)^{2H_1-1}
\\
&\quad \times\varPhi \biggl(1,\frac{u}{s} \frac{r(1-y)}{r-y(r-1)} \biggr)\varPhi
\biggl(1,\frac{r(1-y)}{r-y(r-1)} \biggr) dy. \end{aligned} %
\end{equation*}

Recall that $r=\frac{s}{s-u}$. Then it follows from the boundedness of
$\varPhi$ that there exists a constant $C_H^1$ such that, for $s>u$,
\begin{align}
\kappa_0(s,u) &= \bigl(
\beta_{H_2}(H_2-H_1)\bigr)^2 \int
_{0}^{1}(1-y)^{1-2H_2} \biggl(1-
\frac{u}{s}y \biggr)^{2H_1-1}y^{H_2-H_1-1}\nonumber\\
&\quad \times\varPhi \biggl(1, \frac{u(1-y)}{s-uy} \biggr)\varPhi \biggl(1,
\frac{s(1-y)}{s-uy} \biggr)dy\nonumber\\
&\quad \leq C_H^1\int_{0}^{1}
(1-y)^{1-2H_2}y^{H_2-H_1-1}dy,\label{kappasu}
\end{align}
so $\kappa_0$ is bounded, and the lemma is proved.
\end{proof}

\begin{remark} Figure~\ref{kappagraph} demonstrates the graph of
$\kappa_0(s,u)$ for $H_1=0.7$ and $H_2=0.9$.

\begin{figure}[t]
\includegraphics{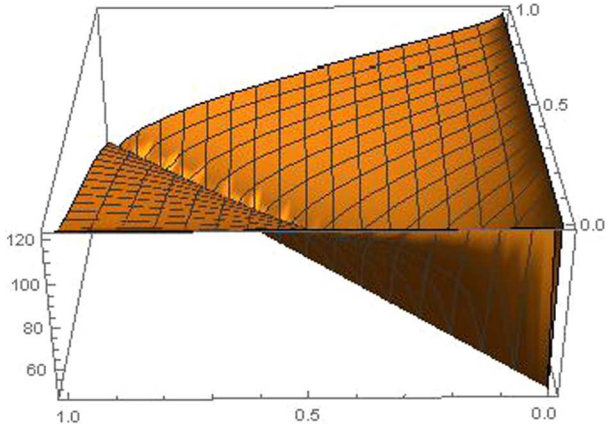}
\caption{Function $\kappa_0(s,u)$}
\label{kappagraph}
\end{figure}
\end{remark}
Now, consider the properties of the function
\[
\varphi(s,u)=(s\wedge u)^{1-2H_1}u^{2H_1-1} \abs{s-u}^{2H_2-2H_1-1 }
\]
participating in the kernel representation \eqref{factorization}.

\begin{lemma}\label{lemmavarphi}
The function $\varphi$ has the following properties:
\medskip
\begin{enumerate}\addtolength{\itemsep}{0.7\baselineskip}
\item[\rm(i)] \label{pi} for any $u\in[0,T]$, $\varphi(\cdot
,u)\in L_1[0,T]$ and $\sup\limits_{u\in[0,T]}
\lVert\varphi(\cdot,u)\rVert_{L_1} < \infty$

\item[\rm(ii)] \label{pii} for any $u_1\in[0,T]$, $\int_0^T
\vert\varphi(s,u) - \varphi(s,u_1)\vert ds \rightarrow0$ as $u\rightarrow u_1$.
\end{enumerate}
\end{lemma}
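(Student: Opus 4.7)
For part (i) I would split the integral $\int_0^T \varphi(s,u)\,ds$ at $s=u$. On $\{s<u\}$ we have $(s\wedge u) = s$, so the contribution is
\[
u^{2H_1-1}\int_0^u s^{1-2H_1}(u-s)^{2H_2-2H_1-1}\,ds,
\]
and the substitution $s=ut$ reduces this to a beta integral: the result is $u^{2H_2-2H_1}\mathrm{B}(2-2H_1,\,2H_2-2H_1)$. Both beta-parameters are strictly positive because $H_1<1$ and $H_1<H_2$, so convergence is guaranteed. On $\{s>u\}$ we have $(s\wedge u)^{1-2H_1}u^{2H_1-1}=u^{1-2H_1}u^{2H_1-1}=1$ (this is where using $H_1\ge 1/2$ keeps the exponents from clashing with the factor $u^{2H_1-1}$ defined at $u=0$), and the remaining integral $\int_u^T (s-u)^{2H_2-2H_1-1}\,ds = (T-u)^{2H_2-2H_1}/(2H_2-2H_1)$ is elementary. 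Adding the two pieces yields the explicit formula
\[
\bigl\lVert\varphi(\cdot,u)\bigr\rVert_{L_1}
= \mathrm{B}(2-2H_1,\,2H_2-2H_1)\,u^{2H_2-2H_1}
+\frac{(T-u)^{2H_2-2H_1}}{2H_2-2H_1},
\]
which is continuous in $u\in[0,T]$ and bounded above by a constant multiple of $T^{2H_2-2H_1}$, giving the desired uniform bound.

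For part (ii) I would combine pointwise convergence with the continuity of the $L_1$-norm just computed. For every fixed $s\in[0,T]\setminus\{u_1\}$ each of the factors $(s\wedge u)^{1-2H_1}$, $u^{2H_1-1}$ and $\abs{s-u}^{2H_2-2H_1-1}$ is continuous in $u$ at $u_1$ (here $2H_1-1\ge 0$ is essential so that $u^{2H_1-1}$ is continuous also at $u_1=0$, and the singularity of $\abs{s-u}^{2H_2-2H_1-1}$ at $s=u_1$ is of Lebesgue measure zero). Hence $\varphi(s,u)\to\varphi(s,u_1)$ as $u\to u_1$ for almost every $s$. Since $\varphi(\cdot,u)\ge 0$ and, by part (i), $\lVert\varphi(\cdot,u)\rVert_{L_1}\to\lVert\varphi(\cdot,u_1)\rVert_{L_1}$, Scheff\'e's lemma gives
\[
\int_0^T \bigl\lvert\varphi(s,u)-\varphi(s,u_1)\bigr\rvert\,ds \to 0,
\]
which is exactly (ii).

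\textbf{Where the difficulty sits.} There is no deep obstacle: part (i) is a direct beta-integral calculation and part (ii) is a standard Scheff\'e-type upgrade from a.e.\ convergence to $L_1$-convergence. The one point requiring care is the behaviour at $u=0$, where the factor $u^{2H_1-1}$ appears; the hypothesis $H_1\ge 1/2$ makes this factor continuous (and bounded) at zero, so both the integrability and the continuity arguments go through uniformly in $u\in[0,T]$ without a separate treatment of the endpoint.
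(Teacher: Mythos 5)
Your proposal is correct. Part (i) is the same computation as in the paper: split at $s=u$, reduce the piece on $\{s<u\}$ to $\mathrm{B}(2-2H_1,2H_2-2H_1)u^{2H_2-2H_1}$ and the piece on $\{s>u\}$ to $(T-u)^{2H_2-2H_1}/(2H_2-2H_1)$, then bound by a constant times $T^{2H_2-2H_1}$. For part (ii), however, you take a genuinely different and shorter route. The paper argues by hand: it first treats $u_1=0$ separately, and for $u_1>0$ it splits $\int_0^T\abs{\varphi(s,u)-\varphi(s,u_1)}\,ds$ into three integrals over $[0,u]$, $[u,u_1]$, $[u_1,T]$ (for $u\uparrow u_1$), establishes that $\varphi(s,\cdot)$ is decreasing in its second argument so the absolute values can be removed on the first piece, and then estimates each term explicitly in terms of $(u_1-u)^{2H_2-2H_1}$ and $u_1^{2H_2-2H_1}-u^{2H_2-2H_1}$. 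You instead observe that $\varphi\ge0$, that $\varphi(s,u)\to\varphi(s,u_1)$ for a.e.\ $s$, and that the explicit $L_1$-norm from part (i) is continuous in $u$, so Scheff\'e's lemma upgrades a.e.\ convergence to $L_1$-convergence. This is valid and avoids the case analysis entirely; what the paper's approach buys in exchange is an elementary, self-contained argument with explicit rates of convergence, and it makes the monotonicity of $\varphi(s,\cdot)$ visible (which is not needed elsewhere, but is informative). One small imprecision in your write-up: at $u_1=0$ with $H_1>1/2$ the individual factor $(s\wedge u)^{1-2H_1}=u^{1-2H_1}$ blows up as $u\downarrow0$, so it is not true that ``each of the factors is continuous''; what saves you is that for $u<s$ the product $(s\wedge u)^{1-2H_1}u^{2H_1-1}$ is identically $1$, so the pointwise limit $\varphi(s,u)\to s^{2H_2-2H_1-1}=\varphi(s,0)$ still holds. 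With that remark inserted, the argument is complete.
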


\begin{proof}
$(i)$ It follows from the evident calculations that
\begin{equation*}
\begin{aligned} &\int_0^T \abs{
\varphi(s,u)}ds = \int_0^T \varphi(s,u)ds = \int
_0^u \frac{u^{2H_1-1}ds}{s^{2H_1-1}(u-s)^{1+2H_1-2H_2}}
\\
&\quad +\int_u^T \frac{ds}{(s-u)^{1+2H_1-2H_2}} =
u^{2H_2-2H_1}\mathrm{B}(2-2H_1,2H_2-2H_1)
\\
&\quad + \frac{(T-u)^{2H_2-2H_1}}{2H_2-2H_1} \leq C_{H_1,H_2}T^{2H_2-2H_1} < \infty\text{
for all } u \in[0,T]. \end{aligned} %
\end{equation*}

(ii) First, let $u_1 = 0$ and $u\downarrow0$. Note that $\varphi
(s,0)=s^{2H_2-2H_1-1 }.$ Therefore,
\begin{equation*}
\begin{aligned} &\int_0^T \abs{
\varphi(s,u) - \frac{1}{s^{1+2H_1-2H_2}}}ds = \int_0^u
\abs{\frac{u^{2H_1-1}}{s^{2H_1-1}(u-s)^{1+2H_1-2H_2}} -\frac{1}{s^{1+2H_1-2H_2}}}ds
\\
&\qquad \,{+} \int_u^T\! \frac{ds}{(s-u)^{1+2H_1-2H_2}}
\,{-} \int_u^T\! \frac{ds}{s^{1+2H_1-2H_2}}ds \,{\leq}
\int_0^u\! \frac{u^{2H_1-1}ds}{s^{2H_1-1}(u-s)^{1+2H_1-2H_2}}
\\
&\qquad + \int_0^u \frac{ds}{s^{1+2H_1-2H_2}} +
\frac{1}{2H_2-2H_1}\bigl((s-u)^{2H_2-2H_1} - s^{2H_2-2H_1}\bigr)
\vert_{s=u}^{s=T}
\\
&\quad =\mathrm{B}(2-2H_1, 2H_2-2H_1)
u^{2H_2-2H_1}
\\
&\qquad + \frac{1}{2H_2-2H_1}\bigl(2u^{2H_2-2H_1} + (T-u)^{2H_2-2H_1} -
T^{2H_2-2H_1}\bigr) \rightarrow0, \text{ as } u \rightarrow0. \end{aligned}
\end{equation*}

From now on suppose that $u_1 > 0$ is fixed. Without loss of
generality, suppose that $u\uparrow u_1$. Then
\begin{equation*}
\begin{aligned} &\int\limits
_0^T \abs{\varphi(s,u) -
\varphi(s,u_1)}ds = \int\limits
_0^{u}\abs{\varphi(s,u)-
\varphi(s,u_1)}ds+\int\limits
_{u}^{u_1}\abs{\varphi(s,u)-
\varphi(s,u_1)}ds
\\
&\quad +\int_{u_1}^T\abs{\varphi(s,u)-
\varphi(s,u_1)}ds =: I_1(u, u_1) +
I_2(u,u_1) + I_3(u, u_1).
\end{aligned} %
\end{equation*}

Consider the terms separately.
First, we establish that $\varphi(s,\cdot)$ is decreasing in the
second argument. Indeed, for
$0<s<u < u_1$,
\begin{equation*}
\begin{aligned} \varphi(s,u_1) &= \frac{u_1^{2H_1-1}}{s^{2H_1-1}(u_1-s)^{1+2H_1-2H_2}}=
\frac{1}{s^{2H_1-1}(1-\frac{s}{u_1})^{1+2H_1-2H_2}u_1^{2-2H_2}}
\\
&\quad \leq\frac{1}{s^{2H_1-1}(1-\frac{s}{u})^{1+2H_1-2H_2}u^{2-2H_2}} = \varphi(s,u). \end{aligned} %
\end{equation*}
Therefore,
\begin{equation*}
\begin{aligned} I_1(u, u_1) &= \int
_0^{u} \bigl(\varphi(s,u)-\varphi(s,u_1)
\bigr)ds = \int_0^{u}\varphi(s,u)ds - \int
_0^{u_1}\varphi(s,u_1)ds
\\
&\quad + \int\limits
_{u}^{u_1}\varphi(s,u_1)ds \leq
\mathrm{B}(2-2H_1,2H_2-2H_1)
\bigl(u^{2H_2-2H_1} - u_1^{2H_2-2H_1}\bigr)
\\
&\quad + \frac{u_1^{2H_1-1}(u_1-u)^{2H_2-2H_1}}{2H_2-2H_1} \rightarrow0, \text{ as } u \uparrow
u_1. \end{aligned} %
\end{equation*}
The second integral vanishes as well:
\begin{equation*}
\begin{aligned} I_2(u, u_1) &\leq\int\limits
_{u}^{u_1}
\varphi(s,u)ds+ \int\limits
_{u}^{u_1}\varphi(s,u_1)ds
\\
&\leq \biggl(\frac{1}{2H_2-2H_1} + \biggl(\frac{u_1}{u}
\biggr)^{2H_1-1} \biggr) (u_1-u)^{2H_2-2H_1}\rightarrow0
\end{aligned} %
\end{equation*}
as $u \uparrow u_1$. Finally,
\begin{equation*}
\begin{aligned} I_3(u, u_1) &= \int
_{u_1}^T\frac{ds}{(s-u_1)^{1+2H_1-2H_2}}-\int_{u_1}^T
\frac{ds}{(s-u)^{1+2H_1-2H_2}} = \frac{1}{2H_2-2H_1}
\\
&\quad \times\bigl((T-u_1)^{2H_2-2H_1}-(T-u)^{2H_2-2H_1}+(u_1-u)^{2H_2-2H_1}
\bigr)\rightarrow0
\\
&\quad \qquad \qquad \quad \quad \quad \quad \quad \text{ as } u \uparrow
u_1. \end{aligned} %
\end{equation*}
The lemma is proved.
\end{proof}

\begin{lemma}\label{compactKernel}
The kernel $\kappa$ generates a compact integral operator $\kappa
:C[0,T] \rightarrow C[0,T]$.
\end{lemma}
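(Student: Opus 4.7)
The plan is to apply the Arzelà--Ascoli theorem: I will show that $K$ maps any bounded set in $C[0,T]$ into a uniformly bounded, equicontinuous family, which in particular forces $Kx\in C[0,T]$ for every $x\in C[0,T]$. Both ingredients rest on the factorization $\kappa(s,u)=\kappa_0(s,u)\varphi(s,u)$ from Lemma~\ref{lemmakernel} combined with the two properties of $\varphi$ isolated in Lemma~\ref{lemmavarphi}.

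Uniform boundedness is immediate: for $x\in C[0,T]$ with $\|x\|_\infty\le 1$,
\[
\bigl|(Kx)(u)\bigr|\le \|\kappa_0\|_\infty\int_0^T\varphi(s,u)\,ds\le \|\kappa_0\|_\infty\sup_{u\in[0,T]}\bigl\|\varphi(\cdot,u)\bigr\|_{L_1},
\]
which is finite by Lemma~\ref{lemmavarphi}(i). For equicontinuity I decompose the increment of the kernel as
\[
\kappa(s,u)-\kappa(s,u_1)=\bigl(\kappa_0(s,u)-\kappa_0(s,u_1)\bigr)\varphi(s,u)+\kappa_0(s,u_1)\bigl(\varphi(s,u)-\varphi(s,u_1)\bigr).
\]
The $L_1$-norm in $s$ of the second summand is dominated by $\|\kappa_0\|_\infty\int_0^T|\varphi(s,u)-\varphi(s,u_1)|\,ds$ and tends to $0$ as $u\to u_1$ by Lemma~\ref{lemmavarphi}(ii), contributing $o(1)\cdot\|x\|_\infty$ to $|(Kx)(u)-(Kx)(u_1)|$ uniformly in $x$.

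The first summand requires care because $\kappa_0$ is continuous on $[0,T]^2_0$ but may fail to be continuous at the single point $(0,0)$. If $u_1>0$, then for $u$ sufficiently close to $u_1$ the rectangle $[0,T]\times[u_1/2,\min(2u_1,T)]$ is a compact subset of $[0,T]^2_0$ on which $\kappa_0$ is uniformly continuous, so $\sup_{s\in[0,T]}|\kappa_0(s,u)-\kappa_0(s,u_1)|\to 0$; combined with the uniform $L_1$-bound on $\varphi(\cdot,u)$ this closes the case $u_1>0$. For $u_1=0$ I split the integration at some $\delta>0$: on $[\delta,T]$ the rectangle $[\delta,T]\times[0,\delta]$ stays away from $(0,0)$, so uniform continuity of $\kappa_0$ supplies smallness as $u\to 0$; on $[0,\delta]$ I use boundedness of $\kappa_0$ together with the explicit bound
\[
\int_0^{\delta}\varphi(s,u)\,ds\le u^{2H_2-2H_1}\mathrm{B}(2-2H_1,2H_2-2H_1)+\frac{(\delta-u)^{2H_2-2H_1}}{2H_2-2H_1},
\]
already computed in the proof of Lemma~\ref{lemmavarphi}(i), which can be made arbitrarily small by first choosing $\delta$ small and then restricting $u$ close to $0$.

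Combining the two estimates gives $\sup_{\|x\|_\infty\le 1}|(Kx)(u)-(Kx)(u_1)|\to 0$ as $u\to u_1$, so the image of the unit ball of $C[0,T]$ is equicontinuous and uniformly bounded; Arzelà--Ascoli then yields compactness of $K$. The main obstacle is precisely the splitting near the origin in the case $u_1=0$: the isolated discontinuity of $\kappa_0$ at $(0,0)$ blocks a one-line dominated-convergence argument and forces the separate analysis near $s=0$ using the incomplete-beta-type estimate above.
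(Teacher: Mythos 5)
Your proof is correct and follows essentially the same route as the paper: the paper verifies the two conditions of a compactness criterion from Berezansky--Sheftel--Us (a uniform-in-$u$ $L_1$ bound on $\kappa(\cdot,u)$ and $L_1$-continuity of $u\mapsto\kappa(\cdot,u)$) using exactly your factorization $\kappa=\kappa_0\varphi$, the same two properties of $\varphi$ from Lemma~\ref{lemmavarphi}, and the same add-and-subtract decomposition of $\kappa(s,u)-\kappa(s,u_1)$, so your Arzel\`{a}--Ascoli argument amounts to proving that criterion from scratch. The only place you diverge is the $\kappa_0$-increment term, which the paper disposes of by the one-line dominated convergence argument you claim is blocked by the discontinuity at $(0,0)$ --- but it is not blocked: for fixed $u_1$ (including $u_1=0$) one has $\kappa_0(s,u)\to\kappa_0(s,u_1)$ for every $s>0$, hence for a.e.\ $s$, with dominating function $2\|\kappa_0\|_{\infty}\varphi(s,u_1)\in L_1[0,T]$, so your extra uniform-continuity splitting near the origin is sound but unnecessary.
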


\begin{proof}
According to \cite{BeresUS}, it suffices to prove that the kernel
$\kappa$ defined by \eqref{factorization} satisfies the following two
conditions:

\begin{itemize}
\item[(iii)] for any $u\in[0,T]$, $\kappa(\cdot,u)\in L_1[0,T]$ and
$\sup\limits_{u\in[0,T]}\lVert\kappa(\cdot,u)\rVert_{L_1} < \infty$;
\item[ (iv)] For any $u_1 \in[0,T]$, $\int_0^T
\vert\kappa(s,u)-\kappa(s,u_1)\vert ds \rightarrow0$ as $u \rightarrow u_1$.
\end{itemize}
The first condition follows directly from fact that $\kappa_0(s,u)$ is
bounded (see Lemma~\ref{lemmakernel}) and from Lemma~\ref
{lemmavarphi} (i).

In order to check (iv), consider
\begin{equation*}
\begin{aligned} &\int_0^T\abs{
\kappa(s,u)-\kappa(s,u_1)}ds = \int_0^T
\abs{\kappa_0(s,u)\varphi(s,u)-\kappa_0(s,u_1)
\varphi(s,u_1)}ds
\\
&\quad \leq\int_0^T\kappa_0(s,u)
\abs{\varphi(s,u)-\varphi(s,u_1)}ds+\int_0^T
\varphi(s,u_1) \abs{\kappa_0(s,u)-\kappa_0(s,u_1)}ds.
\end{aligned} %
\end{equation*}
Again, Lemma~\ref{lemmakernel} in the part that states that $\kappa
_0(s,u)$ is bounded, together with Lemma~\ref{lemmavarphi} (ii),
guarantees that the first term converges to zero as $u\rightarrow u_1$.
Furthermore, Lemma~\ref{lemmakernel} in the part that states that
$\kappa_0\in C([0,T]^2_0) $ guarantees that $\kappa_0(s,u)$ converges
to $\kappa_0(s,u_1)$ as $u\rightarrow u_1$ for a.e.\ $s\in[0,T]$. Since
\[
\varphi(s,u_1) \abs{\kappa_0(s,u)-\kappa_0(s,u_1)}
\leq C\varphi (s,u_1)\in L_1[0,T],
\]
the proof follows from the Lebesgue dominated convergence theorem.
\end{proof}

\begin{remark}\label{lemmaker}
In the case where $H_1=\frac{1}2$, the kernel $\kappa(s,u)$ can be
simplified to
\[
\kappa(s,u)= H_2(2H_2-1)\abs{s-u}^{2H_2-2},
\]
and Eq.~\eqref{integralequation} coincides with \eqref
{integralequationCCK}. Indeed,
let $H_1=\frac{1}2$. Then the function $\kappa_0(s,u)$ equals
$H_2(2H_2-1)$. Consider the function $\varPhi(s,v)$ defined by \eqref
{Phifunction}:
\begin{equation}
\begin{aligned}\label{Phisimple} \varPhi(t,s) &=t^{H_2-\frac{1}2} \Biggl(
\int_{0}^{1}\biggl(1 - \frac{t-s}{t}z
\biggr)^{H_2-\frac{1}2}(1-z)^{H_2-\frac{3}2}dz
\\
&\quad + \frac{t-s}{t}\int_0^1(1-z)^{H_2-\frac{1}2}
\biggl(1-\frac{t-s}{t}z\biggr)^{H_2-\frac{3}2}dz \Biggr)
\\
&= -\frac{t^{H_2-\frac{1}2}}{H_2-\frac{1}2}\int_{0}^{1} \biggl(
\biggl(1 - \frac{t-s}{t}z\biggr)^{H_2-\frac{1}2}(1-z)^{H_2-\frac{1}2}
\biggr)^{'}_{z}dz = \frac{t^{H_2-\frac{1}2}}{H_2-\frac{1}2}. \end{aligned}
\end{equation}

Combining \eqref{kappasu} and \eqref{Phisimple}, we get
\begin{equation*}
\begin{aligned}
&\kappa_0(s,u)\\
&\quad  = \bigl(\beta_{H_2}(H_2\,{-}\,H_1)\bigr)^2 \!\int_{0}^{1}(1-t)^{1-2H_2}t^{H_2-\frac{3}2}\varPhi\biggl(1,\frac{u(1-t)}{s-ut}\biggr)\varPhi\biggl(1,\frac{s(1-t)}{s-ut}\biggr)dt\\
&\quad = \beta_{H_2}^2\!\int_{0}^{1}\! (1\,{-}\,t)^{1-2H_2}t^{H_2-\frac{3}2}dt \,{=}\, \beta_{H_2}^2\mathrm{B}\biggl(H_2\,{-}\,\frac{1}2,2-2H_2\biggr) = H_2(2H_2-1).
\end{aligned} %
\end{equation*}
\end{remark}

\begin{thm}
There exists a sequence $T_n \rightarrow\infty$ such that
the integral equation \eqref{integralequation2} has a unique solution
$h_{T_n}(u)\in C[0,T_n]$.
\end{thm}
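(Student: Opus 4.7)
My plan is to invoke the Fredholm alternative for compact operators on $C[0,T]$. By Lemma \ref{compactKernel}, the integral operator $K$ with kernel $\kappa$ is compact on $C[0,T]$, so equation \eqref{integralequation2}, written as $(I + \gamma_{H_1}^{-2} K)h_T = 1$, admits a unique continuous solution if and only if $-\gamma_{H_1}^2$ is not an eigenvalue of $K$. The task thus reduces to excluding $-\gamma_{H_1}^2$ from the point spectrum of $K_T$ for at least a sequence of $T = T_n \to \infty$.

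The natural way to accomplish this is to exploit the positive semi-definite structure concealed in the factor $k(s,u) = \int_0^{s\wedge u} \partial_s K_{H_1,H_2}(s,v) \partial_u K_{H_1,H_2}(u,v)\, dv$ of the kernel $\kappa(s,u) = u^{2H_1-1} k(s,u)$. Introducing the weight $w(u) = u^{1-2H_1}$ (integrable on $[0,T]$ since $H_1 < 1$) and the pairing $\langle f,g\rangle_w = \int_0^T f(u)g(u) w(u)\, du$, a direct manipulation shows $\langle Kf, g\rangle_w = \int_0^T\!\!\int_0^T k(s,u) f(s) g(u)\, ds\, du$, which is symmetric in $f$ and $g$. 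Since $k$ is a Gram-type kernel, this quadratic form is non-negative. Pairing any eigenrelation $Kv = \lambda v$, $v\in C[0,T]\setminus\{0\}$, with $v$ in $\langle\cdot,\cdot\rangle_w$ yields $\lambda\,\langle v,v\rangle_w \ge 0$, so every eigenvalue of $K$ is non-negative; in particular $-\gamma_{H_1}^2 < 0$ is never an eigenvalue. By the Fredholm alternative, \eqref{integralequation2} then has a unique solution in $C[0,T]$ for every $T > 0$, and the required sequence $T_n$ may be chosen arbitrarily.

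The step I expect to demand the most care is the Fubini/swap of integration implicit in the positive semi-definite calculation, since $\partial_s K_{H_1,H_2}$ and the weight $u^{1-2H_1}$ both carry singularities; integrability and absolute convergence must be justified using the explicit bounds on $\kappa_0$ and $\varphi$ established in Lemmas \ref{lemmakernel} and \ref{lemmavarphi}, together with the description of $\partial_t K_{H_1,H_2}$ derived in the proof of Lemma \ref{lemmakernel}. Should a technical difficulty arise at $u=0$ (where the weight blows up when $H_1 > 1/2$), a weaker but still sufficient fallback is to observe that the eigenvalues of $K_T$ depend continuously, in fact analytically, on $T$, so the set of exceptional $T$ at which $-\gamma_{H_1}^2$ lies in the spectrum is at most countable, and a sequence $T_n \to \infty$ of admissible values then exists immediately.
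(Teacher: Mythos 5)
Your proof is correct but takes a genuinely different route from the paper's. Both arguments start from the same place --- Lemma \ref{compactKernel} plus the Fredholm alternative reduce the theorem to showing that $-\gamma_{H_1}^2$ is not an eigenvalue of the operator generated by $\kappa$ --- but you exclude it by positivity, whereas the paper excludes it by counting. Your symmetrization $\langle \kappa f,g\rangle_w=\int_0^T\int_0^T k(s,u)f(s)g(u)\,ds\,du$ with $w(u)=u^{1-2H_1}$ is exactly right, and the Gram structure of $k$ does make this form positive semi-definite; the one caveat is that the slices $v\mapsto \partial_s K_{H_1,H_2}(s,v)\ind{v<s}$ fail to lie in $L_2$ when $H_2-H_1\le 1/2$ (precisely the regime the paper is fighting), so ``Gram-type'' must be read via Tonelli rather than as inner products of $L_2$ elements: all factors are non-negative because $\partial_t K_{H_1,H_2}(t,s)=\beta_{H_2}(H_2-H_1)s^{1/2-H_2}(t-s)^{H_2-H_1-1}\varPhi(t,s)\ge0$, and the relevant triple integral converges absolutely because $k(s,u)\le C(s\wedge u)^{1-2H_1}\abs{s-u}^{2H_2-2H_1-1}$ is integrable over $[0,T]^2$ (both exponents exceed $-1$). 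Granting that, every real eigenvalue is non-negative (and a complex eigenfunction for the real number $-\gamma_{H_1}^2$ would yield a real one by taking real and imaginary parts), so you in fact prove more than the statement: unique solvability for \emph{every} $T>0$, not merely along a sequence. The paper instead exploits the homogeneity $\kappa(as,au)=a^{2H_2-2H_1-1}\kappa(s,u)$ to rescale the homogeneous equation to the fixed interval $[0,1]$, turning the $T$-dependence into the single scalar $\lambda=-\gamma_{H_1}^2T^{2H_1-2H_2}$; since a fixed compact operator has at most countably many eigenvalues, all but countably many $T$ are admissible. That is essentially a cleaner version of your fallback --- it avoids any perturbation theory for a $T$-dependent family of operators acting on varying spaces --- but it only yields a sequence $T_n$, which is all the paper claims.
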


\begin{proof} We work on the space $C([0,T])$.
Recall that \eqref{integralequation2} is of the form
\[
h_T(u) +\frac{1}{\gamma_{H_1}^2} \int_0^T
h_T(s) \kappa(s,u)ds = 1, \quad u\in[0,T].
\]
The corresponding homogeneous equation is of the form
\begin{equation}
\label{homogeneous} \int_0^T h_T(s)
\kappa(s,u)ds = -\gamma_{H_1}^2 h_T(u), \quad u
\in[0,T].
\end{equation}

Since the integral operator $\kappa$ is compact, classical Fredholm
theory states that Eq.~\eqref{integralequation2} has a unique solution
if and only if the corresponding homogeneous equation \eqref
{homogeneous} has only the trivial solution.
Now, it is easy to see that, for any $a>0$, the following equalities hold:
\begin{align*}
\kappa_0(sa,ua) &= \kappa_0(s,u),
\\
\varphi(sa,ua) &= a^{2H_2-2H_1-1}\varphi(s,u).
\end{align*}

Consequently, $\kappa(sa,ua) = a^{2H_2-2H_1-1}\kappa(s,u).$ We can
change the variable of integration $s=s' T$ and put $u=u'T$ in \eqref
{homogeneous}. Therefore, the equation will be reduced to the
equivalent form
\begin{equation*}
\int_0^1 h_T(Ts) \kappa(s,u)ds = -
\gamma_{H_1}^2 T^{2H_1-2H_2} h_T(Tu), \quad u
\in[0,1].
\end{equation*}
Denote $\lambda= -\gamma_{H_1}^2 T^{2H_1-2H_2}$. Note that $\lambda$
depends continuously on $T$. At the same time, the compact operator
$\kappa$ has no more than countably many eigenvalues. Therefore, we
can take the sequence $T_n\rightarrow\infty$ in such a way that
\[
\lambda_n = -\gamma_{H_1}^2
T_n^{2H_1-2H_2}
\]
will be not an eigenvalue. Consequently, the homogeneous equation has
only the trivial solution, whence the proof follows.
\end{proof}

\section{Statistical results: The form of a maximum likelihood
estimator, its consistency, and asymptotic normality}\label{sec4}

The following result establishes the way MLE for the drift parameter
$\theta$ can be calculated. The proof of the theorem is the same as
the proof of the corresponding statement from \cite{MiSumfbms1}, so we
omit it.

\begin{thm} The likelihood function is of the form
\begin{equation*}
\begin{gathered}L_{T_n}(X,\theta)=\exp\biggl\{\theta\delta
_{H_1}N(T_n)-\frac{1}2\theta^2
\delta^2_{H_1}\langle N\rangle(T_n)\biggr\},
\end{gathered} %
\end{equation*}
and the maximum likelihood estimator is of the form
\begin{equation*}
\begin{gathered}\widehat{\theta}(T_n)=
\frac{N(T_n)}{\delta
_{H_1}\langle N\rangle(T_n)}, \end{gathered} %
\end{equation*}
where $N(t)=E_0(X_1(t)|\mathfrak{F}^X_t)$ is a square-integrable
Gaussian $\mathfrak{F}^X_t$-martingale,\\ $N(T_n)=\int_0^{T_n}h_{T_n}(t)dX(t)$ with $h_{T_n}(t)t^{\frac{1}2-H_1}\in
L_2[0,T_n]$, $h_{T_n}(t)$ is a unique solution to \eqref
{integralequation2}, and $\langle N\rangle(T_n)=\gamma^2_{H_1}\int_0^{T_n} h_{T_n}(t)t^{1-2H_1}dt.$
\end{thm}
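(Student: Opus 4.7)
The plan is to first identify $N(T_n)$ and compute its predictable quadratic characteristic, then reduce the conditional Girsanov expression derived in Section~\ref{sec2} to the stated exponential-quadratic form, and finally maximize in $\theta$. For the identification of $N$: since $(X_1,X)$ is centered Gaussian under $\mathsf{P}_0$, the conditional expectation $N(T_n)=\mathsf{E}_0(X_1(T_n)\mid\mathfrak{F}^X_{T_n})$ coincides with the $L^2(\mathsf{P}_0)$-projection $P_X X_1(T_n)$ onto the closed linear span of $\{X(t),\,t\in[0,T_n]\}$, which by the construction of Section~\ref{sec2} equals $\int_0^{T_n} h_{T_n}(t)\,dX(t)$, with $h_{T_n}$ the unique continuous solution of \eqref{integralequation2} on $[0,T_n]$. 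Since $h_{T_n}$ is bounded and $H_1<1$, the integrability $h_{T_n}(t)t^{1/2-H_1}\in L_2[0,T_n]$ is automatic, so the stochastic integral is well defined. The martingale property of $N(t)$ in $\mathfrak{F}^X_t$ follows from the tower rule together with the fact that $X_1$ is a martingale in the larger filtration $\mathfrak{F}^{W_1,W_2}_t\supseteq\mathfrak{F}^X_t$.

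Next, since $N$ is a square-integrable Gaussian martingale starting from zero, $\langle N\rangle(T_n)=\mathsf{E}_0(N(T_n)^2)$, and the projection identity gives $\mathsf{E}_0(N(T_n)^2)=\mathsf{E}_0(N(T_n)X_1(T_n))$. Independence of $X_1$ and $X_2$ removes the $X_2$-component of $N(T_n)$ inside this expectation; plugging in the representation $X_1(t)=\gamma_{H_1}\int_0^t s^{1/2-H_1}\,dW_1(s)$ from~\eqref{X1} and applying It\^o isometry yields
\[
\langle N\rangle(T_n)=\gamma_{H_1}^2\int_0^{T_n} h_{T_n}(t)t^{1-2H_1}\,dt,
\]
which is the stated formula.

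For the form of the likelihood, start from the conditional expectation derived in Section~\ref{sec2},
\[
L_{T_n}(X,\theta)=\exp\Bigl\{\theta\delta_{H_1}N(T_n)+\tfrac{1}{2}\theta^2\delta_{H_1}^2\bigl(V(T_n)-\tfrac{T_n^{2-2H_1}}{2-2H_1}\bigr)\Bigr\},
\]
with $V(t)=\mathsf{E}_0(X_1(t)-N(t))^2=\mathsf{E}_0(X_1(t)^2)-\langle N\rangle(t)$ by the projection identity. The key cancellation is $V(T_n)-T_n^{2-2H_1}/(2-2H_1)=-\langle N\rangle(T_n)$, which follows from the second-moment identity for $X_1(T_n)$ built into the normalization of the Molchan martingale chosen in Section~\ref{sec2}. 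This collapses $L_{T_n}(X,\theta)$ to the stated exponential-quadratic form.

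Finally, $\log L_{T_n}(X,\theta)$ is a concave quadratic in $\theta$ with strictly negative leading coefficient $-\tfrac{1}{2}\delta_{H_1}^2\langle N\rangle(T_n)<0$, so setting $\partial_\theta\log L_{T_n}=\delta_{H_1}N(T_n)-\theta\delta_{H_1}^2\langle N\rangle(T_n)$ to zero yields the unique maximizer $\widehat\theta(T_n)=N(T_n)/(\delta_{H_1}\langle N\rangle(T_n))$. The main technical point is the algebraic cancellation in the likelihood derivation, which rests on the projection identity together with the specific normalization of the Molchan martingale $X_1$; everything else is a routine conditioning computation for Gaussian exponentials followed by the stationary-point calculation.
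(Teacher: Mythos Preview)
The paper does not give a proof of this theorem, referring instead to \cite{MiSumfbms1}; your argument is exactly the standard one and follows the reduction already laid out in Section~\ref{sec2}. The identification $N(T_n)=P_X X_1(T_n)=\int_0^{T_n}h_{T_n}\,dX$, the martingale property via the tower rule, and the computation of $\langle N\rangle(T_n)$ through the projection identity $\mathsf{E}_0(N(T_n)^2)=\mathsf{E}_0(N(T_n)X_1(T_n))$ together with the It\^o isometry for $X_1$ are all correct, as is the final maximization in $\theta$.

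One remark on the step you label the ``key cancellation''. By Pythagoras for the projection one has
\[
V(T_n)+\langle N\rangle(T_n)=\mathsf{E}_0\bigl(X_1(T_n)^2\bigr)=\frac{\gamma_{H_1}^2}{2-2H_1}\,T_n^{2-2H_1},
\]
which differs by a factor $\gamma_{H_1}^2$ from the term $T_n^{2-2H_1}/(2-2H_1)$ appearing in the Section~\ref{sec2} display for $L_T(X,\theta)$. This discrepancy is already present in the paper's Girsanov step, where $\int_0^T s^{1/2-H_1}\,d\widetilde W(s)$ is identified with $\widetilde X_1(T)$ rather than $\gamma_{H_1}^{-1}\widetilde X_1(T)$; it is a constant slip in the paper's formulas, not a gap in your method. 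Your phrase ``built into the normalization of the Molchan martingale'' papers over this point; it would be cleaner to either carry the $\gamma_{H_1}$ through explicitly or note the typo. With the constants reconciled, your proof is complete and matches the intended argument.
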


The next two results establish basic properties of the estimator; their
proofs repeat the proofs of the corresponding statements from \cite
{MiSumfbms1} and \cite{CaiChigKlept}.

\begin{thm}
The estimator $\widehat{\theta}_{T_n}$ is strongly consistent, and
\[
\lim_{{T_n}\rightarrow\infty}{T_n}^{2-2H_2}E_\theta(
\widehat {\theta}_{T_n}-\theta)^2=\frac{1}{\int_0^1h_0(u)u^{\frac{1}2-H_1}du},
\]
where the function $h_0(u)$ is the solution of the integral equation
\begin{equation*}
\kappa h(u) = \gamma^2_{H_1}.
\end{equation*}
\end{thm}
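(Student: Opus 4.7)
The plan is to exploit the martingale structure inherited from Girsanov's theorem. Under $\mathsf{P}_\theta$, the identity $\widetilde{X}_1 = X_1 + \theta\mathcal{B}_{H_1}t^{2-2H_1}$ allows one to write
\[
N(T_n) = \theta\,\delta_{H_1}\langle N\rangle(T_n) + M(T_n),
\]
where $M$ is a continuous centered $\mathsf{P}_\theta$-Gaussian martingale with $\langle M\rangle=\langle N\rangle$. This gives both
\[
\widehat\theta(T_n)-\theta = \frac{M(T_n)}{\delta_{H_1}\langle N\rangle(T_n)},
\qquad
E_\theta\bigl(\widehat\theta(T_n)-\theta\bigr)^2 = \frac{1}{\delta_{H_1}^{2}\langle N\rangle(T_n)},
\]
the latter since $M(T_n)$ is centered Gaussian with variance $\langle N\rangle(T_n)$. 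Strong consistency then follows from the strong law of large numbers for continuous square-integrable martingales, once one knows that $\langle N\rangle(T_n)\to\infty$. Thus the whole theorem reduces to pinning down the rate of growth of $\langle N\rangle(T_n)$.

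For the rate, I would rescale to the unit interval. Using the homogeneity $\kappa(as,au)=a^{2H_2-2H_1-1}\kappa(s,u)$ already exploited at the end of the previous proof, the substitution $s=Ts'$, $u=Tu'$ in \eqref{integralequation2} shows that the normalized function $g_T(u):=T^{2H_2-2H_1}h_T(Tu)$ satisfies the second-kind equation
\[
\varepsilon_T\,g_T(u) + \frac{1}{\gamma_{H_1}^{2}}\int_0^1 g_T(s)\kappa(s,u)\,ds = 1,\qquad \varepsilon_T:=T^{-(2H_2-2H_1)},\ u\in[0,1],
\]
while the same change of variable in the bracket gives
\[
\langle N\rangle(T) = \gamma_{H_1}^{2}\,T^{2-2H_2}\int_0^1 g_T(u)\,u^{1-2H_1}\,du.
\]
Granting the convergence $g_{T_n}\to h_0$ with $\kappa h_0=\gamma_{H_1}^{2}$, dominated convergence yields $T_n^{2H_2-2}\langle N\rangle(T_n)\to\gamma_{H_1}^{2}\int_0^1 h_0(u)u^{1-2H_1}\,du$, which, combined with the $L^2$-identity above, produces the claimed limit after absorbing the constants $\gamma_{H_1}^{2}\delta_{H_1}^{2}$ into the normalization of $h_0$.

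The main obstacle is justifying the passage $g_{T_n}\to h_0$, since the formal limiting relation $\kappa h_0 = \gamma_{H_1}^{2}$ is a Fredholm equation of the first kind and compactness of $\kappa$ on $C[0,1]$ (Lemma~\ref{compactKernel}) makes such equations generically ill-posed. I would handle this within the Fredholm framework: thinning $T_n$ further so that $-\gamma_{H_1}^{2}\varepsilon_{T_n}$ avoids the (countable) spectrum of $\kappa$, the operators $\varepsilon_{T_n}I+\gamma_{H_1}^{-2}\kappa$ are boundedly invertible on $C[0,1]$, the family $\{g_{T_n}\}$ stays uniformly bounded, and by the Arzelà--Ascoli argument built on the equicontinuity estimates already used in the proof of Lemma~\ref{compactKernel} one extracts a $C[0,1]$-accumulation point $h_0$. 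Passing to the limit in the rescaled equation identifies $h_0$ as a solution of $\kappa h_0=\gamma_{H_1}^{2}$; uniqueness, and hence full convergence of the sequence, follows from the triviality of the null space of $\kappa$ on $C[0,1]$ (again a Fredholm-type argument, invoking the selection of $T_n$). Positivity of $h_0$, inherited from the positivity of $h_{T_n}$ together with the sign structure of $\kappa_0$, secures $\int_0^1 h_0(u)u^{1-2H_1}\,du>0$, which in turn gives the required divergence $\langle N\rangle(T_n)\to\infty$ and closes both the consistency and the asymptotic-variance statements.
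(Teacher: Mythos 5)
The paper itself does not prove this theorem --- it states that the proof ``repeats the proofs of the corresponding statements from \cite{MiSumfbms1} and \cite{CaiChigKlept}'' --- so your proposal can only be judged against that intended argument. Your road map is indeed the right one and matches those references: the decomposition $N(T_n)=\theta\delta_{H_1}\langle N\rangle(T_n)+M(T_n)$, the exact identity $E_\theta(\widehat\theta_{T_n}-\theta)^2=(\delta_{H_1}^2\langle N\rangle(T_n))^{-1}$, the martingale SLLN for consistency, and the rescaling of \eqref{integralequation2} to $[0,1]$ via the homogeneity $\kappa(as,au)=a^{2H_2-2H_1-1}\kappa(s,u)$ are all correct and are exactly the reductions used in \cite{CaiChigKlept}.

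The genuine gap is the step you yourself flag as the obstacle, namely $g_{T_n}\to h_0$, and the justification you sketch for it does not work. First, bounded invertibility of $\varepsilon_{T_n}I+\gamma_{H_1}^{-2}\kappa$ for each $n$ gives no uniform bound on $g_{T_n}=(\varepsilon_{T_n}I+\gamma_{H_1}^{-2}\kappa)^{-1}1$: since $\kappa$ is compact on an infinite-dimensional space, $0$ lies in its spectrum and $\lVert(\varepsilon I+\gamma_{H_1}^{-2}\kappa)^{-1}\rVert\to\infty$ as $\varepsilon\downarrow 0$, so thinning $T_n$ to avoid eigenvalues buys you nothing here. Uniform boundedness and convergence require the additional structure that $\kappa$ is a self-adjoint, positive (covariance-type) operator on the appropriate weighted $L_2$ space \emph{and} that the constant function lies in the range of $\kappa$ --- i.e.\ that the first-kind equation $\kappa h_0=\gamma_{H_1}^2$ is solvable at all, which is presupposed by the theorem and never established in your argument. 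Second, the injectivity of $\kappa$ (needed for uniqueness of $h_0$ and for upgrading subsequential to full convergence) is not a ``Fredholm-type'' fact: the Fredholm alternative concerns $\lambda I+\kappa$ with $\lambda\neq 0$ and says nothing about the kernel of $\kappa$ itself, nor does the selection of $T_n$ bear on it; injectivity must come from the probabilistic interpretation of the quadratic form $\langle\kappa f,f\rangle$. Third, the positivity of $h_{T_n}$, used to get $\int_0^1 h_0(u)u^{1-2H_1}du>0$ and hence $\langle N\rangle(T_n)\to\infty$, is asserted without proof. Finally, a bookkeeping point: your computation produces the weight $u^{1-2H_1}$ and the prefactor $(\gamma_{H_1}\delta_{H_1})^{-2}$, whereas the statement has $u^{1/2-H_1}$ with no constants; since $h_0$ is pinned down by the displayed equation $\kappa h=\gamma_{H_1}^2$, ``absorbing constants into the normalization of $h_0$'' is not available and the reconciliation (presumably via the substitution $h_0(u)u^{1/2-H_1}$, the natural $L_2$ object mentioned in Section~\ref{sec4}) must be made explicit.
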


\begin{thm}
The estimator $\widehat{\theta}_{T_n}$ is unbiased, and the
corresponding estimation error is normal
\[
\widehat{\theta}_{T_n}-\theta\sim N \biggl(0, \frac{1}{\int_0^{T_n}h_{T_n}(s)s^{1-2H_1}ds}
\biggr).
\]
\end{thm}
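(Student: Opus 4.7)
The plan is to exploit the fact that $\widehat{\theta}_{T_n}$ is a linear functional of the Gaussian observation process $Y$, so that its law under $\mathsf{P}_\theta$ is Gaussian and determined by its mean and variance. The proof will proceed in three short steps: decompose $Y$ under $\mathsf{P}_\theta$ into a deterministic drift plus a Gaussian part having the same law as under $\mathsf{P}_0$; substitute into the formula for $\widehat{\theta}_{T_n}$; and read off the mean (to get unbiasedness) and the variance.

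Under $\mathsf{P}_\theta$ one has $Y(t)=\theta\mathcal{B}_{H_1}t^{2-2H_1}+X(t)$ with $X=X_1+X_2$, and by the Girsanov-type construction of $\mathsf{P}_\theta$ the process $X$ retains under $\mathsf{P}_\theta$ exactly the centered Gaussian law it has under $\mathsf{P}_0$; all of the $\theta$-dependence is absorbed into the deterministic drift. Substituting this decomposition into $N(T_n)=\int_{0}^{T_n}h_{T_n}(t)\,dY(t)$ yields
\[
N(T_n)=\theta(2-2H_1)\mathcal{B}_{H_1}\int_{0}^{T_n}h_{T_n}(t)t^{1-2H_1}dt+\widetilde{N}(T_n),
\]
where $\widetilde{N}(T_n):=\int_{0}^{T_n}h_{T_n}(t)\,dX(t)$ is centered Gaussian under $\mathsf{P}_\theta$ with the same variance $\langle N\rangle(T_n)$ that $N(T_n)$ has under $\mathsf{P}_0$.

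Dividing by $\delta_{H_1}\langle N\rangle(T_n)$ and invoking the preceding identities $\delta_{H_1}=(2-2H_1)\mathcal{B}_{H_1}/\gamma_{H_1}$ and $\langle N\rangle(T_n)=\gamma_{H_1}^{2}\int_{0}^{T_n}h_{T_n}(t)t^{1-2H_1}dt$, the deterministic piece of $\widehat{\theta}_{T_n}$ collapses to $\theta$, which is unbiasedness. The remaining piece, $\widetilde{N}(T_n)/(\delta_{H_1}\langle N\rangle(T_n))$, is a linear functional of the Gaussian process $X$, hence centered Gaussian under $\mathsf{P}_\theta$ with variance $\langle N\rangle(T_n)/(\delta_{H_1}\langle N\rangle(T_n))^{2}=1/(\delta_{H_1}^{2}\langle N\rangle(T_n))$. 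Expanding the definitions of $\delta_{H_1}$ and $\langle N\rangle(T_n)$ reduces this to $1/\int_{0}^{T_n}h_{T_n}(s)s^{1-2H_1}\,ds$, the asserted variance; normality of $\widehat{\theta}_{T_n}-\theta$ is then automatic from its linearity in the Gaussian $X$.

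The only real effort is algebraic: tracking the constants $\delta_{H_1}$, $\mathcal{B}_{H_1}$, $\gamma_{H_1}$ in both the drift-cancellation and the variance-reduction steps. There is no analytic obstacle here because the existence-uniqueness result from Section~\ref{sec3} guarantees that $h_{T_n}$ is a well-defined continuous function on $[0,T_n]$, so every integral appearing above is absolutely convergent and the Gaussian structure of the problem is preserved throughout.
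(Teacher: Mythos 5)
Your overall strategy---the estimator is linear in the Gaussian observation $Y$, so split $Y$ under $\mathsf{P}_\theta$ into the deterministic drift $\theta\mathcal{B}_{H_1}t^{2-2H_1}$ plus the centered Gaussian process $X$, substitute into $\widehat\theta_{T_n}$, and read off the mean and variance---is exactly the proof the paper has in mind: it omits the argument and delegates it to the cited works of Cai--Chigansky--Kleptsyna and Mishura, which proceed in precisely this way. Conceptually there is nothing missing.

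The problem is that the entire content of this proof is the two algebraic ``collapses'' that you assert without carrying out, and with the paper's stated constants neither of them closes. The deterministic part of $\int_0^{T_n}h_{T_n}\,dY$ is $\theta(2-2H_1)\mathcal{B}_{H_1}\int_0^{T_n}h_{T_n}(t)t^{1-2H_1}dt$; dividing by $\delta_{H_1}\langle N\rangle(T_n)=\frac{(2-2H_1)\mathcal{B}_{H_1}}{\gamma_{H_1}}\cdot\gamma_{H_1}^2\int_0^{T_n}h_{T_n}(t)t^{1-2H_1}dt$ gives $\theta/\gamma_{H_1}$, not $\theta$. Likewise $1/(\delta_{H_1}^2\langle N\rangle(T_n))=1/\bigl((2-2H_1)^2\mathcal{B}_{H_1}^2\int_0^{T_n}h_{T_n}(s)s^{1-2H_1}ds\bigr)$, which equals the asserted variance only when $(2-2H_1)\mathcal{B}_{H_1}=1$, i.e.\ when $H_1=\frac12$. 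The mismatch is not entirely your fault: it traces back to the paper's own Girsanov computation, where $\int_0^T s^{1/2-H_1}d\widetilde W(s)$ is identified with $\widetilde X_1(T)$ even though these differ by the factor $\gamma_{H_1}$ (recall $M^{H_1}(t)=\gamma_{H_1}\int_0^t s^{1/2-H_1}dW(s)$), and that lost factor propagates into the normalization of $\widehat\theta_{T_n}$ and hence into its bias and variance. Still, ``expanding the definitions reduces this to $1/\int_0^{T_n}h_{T_n}(s)s^{1-2H_1}ds$'' is precisely the step that fails as written; you need to either perform the cancellation explicitly (and then confront the stray factors of $\gamma_{H_1}$ and $(2-2H_1)\mathcal{B}_{H_1}$) or fix the normalization of the estimator before the statement can be verified.
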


\appendix

\section{Appendix. Some properties of the hypergeometric function}\label{sec5}

Recall the integral representation of the Gauss hypergeometric function
and some of its properties.

For $c> b>0$ and $x < 1,$ the Gauss hypergeometric function is defined
as the integral (see \cite{Abr}, formula 15.3.1)
\begin{equation}
\begin{gathered}\label{gyper1} F ({a, b, c}; x )={}_2F_1
({a, b, c}; x ) = \frac
{1}{\mathrm{B}(b,c-b)}\int_0^1t^{b-1}(1-t)^{c-b-1}(1-xt)^{-a}dt.
\end{gathered} %
\end{equation}

For the same values of parameters, the following equality holds (see
\cite{Abr}, 15.3.4):
\begin{equation}
\label{gyper2} F ( a, b, c; x ) = (1-x)^{-a} F \biggl( {a, c-b,c};
\frac
{x}{x-1} \biggr),
\end{equation}

Evidently, $F ( {a, b, c}; x )$ at $x=1$ is correctly defined
for $c-a-b>1$ and in this case equals
\begin{equation}
\label{eq2.25} F ( {a, b, c}; 1 ) = \frac{\varGamma(c)\varGamma(c-a-b)}{\varGamma
(c-a)\varGamma(c-b)}.
\end{equation}

Finally, it is easy to check with the help of \eqref{gyper1} that
\begin{equation}
\label{HGFatzero} F ( {a, b, c}; 0 )=F ( {0, b, c}; x ) = 1.
\end{equation}

The following result gives upper bounds for the hypergeometric function
(see \cite{HGF2} Theorem 4 and 5, respectively).

\begin{prop} \label{HGFineq2}
{\rm(i)} For $c>b>1,\ x>0$, and $0<a\leq1$, we have the inequality
\[
F ({a, b, c}; -x ) < \frac{1}{(1+x(b-1)/(c-1))^a}.
\]

{\rm(ii)} For $0<a\leq1,\ b>0,\ c-b >1, $ and $x\in(0,1),$ we have
the inequality
\[
F ( {a, b, c}; x ) < \frac{1}{(1-\frac{b}{c-1}x)^{a}}.
\]
\end{prop}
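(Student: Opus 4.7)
Both inequalities are attributed to \cite{HGF2}, and the plan is to reproduce the proof by a monotonicity argument applied to the ratio of $F(a,b,c;\pm x)$ to its claimed majorant. For each part, introduce the auxiliary function
\[
g(x) \;=\; \bigl(1 \pm \lambda x\bigr)^{a}\, F(a, b, c; \pm x),
\]
with the ``$+$'' sign and $\lambda = (b-1)/(c-1)$ in case (i), and the ``$-$'' sign and $\lambda = b/(c-1)$ in case (ii). In both cases $g(0) = 1$, so the assertion $F(a,b,c;\pm x) < (1 \pm \lambda x)^{-a}$ is equivalent to $g(x) < 1$, which in turn follows from $g'(x) < 0$ on the stated range of the variable.

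To analyze $g'$, I would differentiate using the identity $\frac{d}{dx} F(a,b,c;x) = \frac{ab}{c} F(a+1,b+1,c+1;x)$, insert the Euler representation \eqref{gyper1} for both $F(a,b,c;\pm x)$ and $F(a+1,b+1,c+1;\pm x)$, and combine the two integrals over $[0,1]$ into a single Beta-weighted integral. A short calculation (using $\mathrm{B}(b+1,c-b) = \frac{b}{c}\,\mathrm{B}(b,c-b)$) reduces the sign of $g'(x)$ to the sign of an integral of the form
\[
\int_0^1 t^{b-1}(1-t)^{c-b-1}(1 \mp xt)^{-a-1}\bigl(t - \lambda\bigr)\,dt,
\]
up to an overall positive factor. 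Since the affine factor $t - \lambda$ changes sign at $t = \lambda \in (0,1)$, the inequality does not follow from any pointwise estimate on the integrand.

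The decisive step is an integration by parts that moves one factor of $(1-t)$ or $t$ off the Beta weight and onto the remaining integrand: in (i) integrate $t^{b-1}$, whose primitive $t^{b}/b$ vanishes at $t=0$ exactly because $b > 1$; in (ii) integrate $(1-t)^{c-b-1}$, whose primitive $-(1-t)^{c-b}/(c-b)$ vanishes at $t=1$ exactly because $c-b > 1$. In both cases the boundary terms thus vanish, and after the integration by parts the remaining integrand becomes a single-signed expression, yielding $g'(x) < 0$ and hence $g(x) < 1$.

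The main obstacle is matching the constant $\lambda$ to the integration-by-parts identity. The values $\lambda = (b-1)/(c-1)$ and $\lambda = b/(c-1)$ are not guessed but forced: they are precisely the values for which the lower-order terms produced by integrating $t^{b-1}$ or $(1-t)^{c-b-1}$ cancel the cross terms, leaving a remainder of definite sign. The hypotheses $b > 1$ in (i) and $c-b > 1$ in (ii) are therefore essential both to legitimize the integration by parts (by killing the boundary contributions) and to produce the denominator $c-1$ in $\lambda$; without the extra unit of regularity the method collapses, which explains why the bounds take this exact shape rather than the weaker Jensen-style bound with denominator $c$.
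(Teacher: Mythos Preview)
The paper does not prove this proposition at all; it is simply quoted from \cite{HGF2} (Karp--Sitnik, Theorems~4 and~5) with no argument given. So there is no ``paper's own proof'' to compare your attempt against.

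As a reconstruction of the cited result, your outline follows a reasonable monotonicity route and correctly reduces each inequality to the sign of
\[
\int_0^1 t^{b-1}(1-t)^{c-b-1}(1\mp xt)^{-a-1}(t-\lambda)\,dt.
\]
However, the integration-by-parts step is not justified as you state it. The primitive $t^b/b$ vanishes at $t=0$ for every $b>0$, not ``exactly because $b>1$''; similarly $(1-t)^{c-b}/(c-b)$ vanishes at $t=1$ for every $c-b>0$, not only for $c-b>1$. You also do not discuss the opposite endpoints, where the Beta weight can blow up. More seriously, you assert that after one integration by parts ``the remaining integrand becomes a single-signed expression'', but you do not check this, and it is not obvious: for example, in part~(i) the natural identity
\[
\frac{d}{dt}\bigl[t^{b-1}(1-t)^{c-b}\bigr]=(c-1)\,t^{b-2}(1-t)^{c-b-1}(\lambda-t),\qquad \lambda=\tfrac{b-1}{c-1},
\]
leads after integrating by parts against $(1+xt)^{-a-1}$ to an integrand containing the factor $1-axt$, whose sign is not constant on $[0,1]$ when $ax>1$. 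The parameter restrictions $b>1$ and $c-b>1$ enter the actual argument in a different way (they are tied to the choice of $\lambda$ and to the sign analysis, not merely to boundary terms). So the high-level plan is sound, but the decisive step---showing the post-IBP integrand is single-signed---is asserted rather than proved, and the stated role of the hypotheses is off.
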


\end{document}